\newtheorem{theorem}{Theorem}
\newcommand{\I}{\mathcal{I}}
\newcommand{\Map}{\operatorname{Map}}
\begin{document}

\noindent

\title[ Joint Projective Invariants] { Joint Projective Invariants on First Jet Spaces of Point Configurations via Moving Frames} 

\author{Leonid Bedratyuk} 
\address{ Khmelnytsky National University, Ukraine}
\email{leonidbedratyuk@khmnu.edu.ua}

\begin{abstract}
We consider the action of the projective group $PGL(3,\mathbb{R})$ on the $n$-fold first-order jet space of point configurations on the plane. Using the method of moving frames, we construct an explicit complete generating set for the field of absolute first-order joint projective differential invariants $\mathcal{I}_{n,0}$ for any $n \ge 3$. This approach provides a unified construction for all $n$, immediately ensuring functional independence of the fundamental invariants and yielding formulas suitable for both symbolic and numerical implementation. 

Next, we study the field of relative first-order invariants $\mathcal{I}_n$ with Jacobian multiplier. It is shown that the invariantization of the Jacobian under the projective action yields a primitive element of the field extension $\mathcal{I}_n / \mathcal{I}_{n,0}$.

Finally, we introduce a multiplicative cochain complex $C^\bullet$ associated with the action of $PGL(3,\mathbb{R})$ on the jet space, and show that the invariantization operator induced by the moving frame generates an explicit contracting homotopy. This provides a constructive proof of the vanishing of higher cohomology and an interpretation of the “defect” of invariantization as an exact cocycle in $C^\bullet$.

\medskip

\noindent\textbf{Keywords:} projective group, joint differential invariants, moving frame, cochain complex, invariantization, contracting homotopy, Jacobian, cohomology

\end{abstract}

\maketitle

\section{Introduction}

We consider a smooth function $u\colon \mathbb{R}^2 \to \mathbb{R}$ interpreted as a scalar field on the plane. For each point $(x_i, y_i) \in \mathbb{R}^2$, we consider the first jet of $u$, that is, the tuple of variables $(x_i, y_i, p_i, q_i)$, where $p_i = \partial_x u(x_i, y_i)$, $q_i = \partial_y u(x_i, y_i)$. A configuration of $n$ such points defines an element of the $n$-fold first-order jet space:
\[
\mathcal{M} = \big(J^1(\mathbb{R}^2, \mathbb{R})\big)^{\!n} \;\cong\; \big(\mathbb{R}^2 \times \mathbb{R}^2\big)^{\!n},
\]
with coordinates of the form
\[
\boldsymbol{x} = \big(x_i, y_i, p_i, q_i\big)_{i=1}^n.
\]

The projective group $PGL(3, \mathbb{R})$ acts on the plane $\mathbb{R}^2$ via projective transformations. This action naturally prolongs to the jet space $J^1(\mathbb{R}^2, \mathbb{R})$ and hence to $\mathcal{M}$ by acting on each of the $n$ copies. The induced action of the projective group on $\mathcal{M}$ also defines its action on the field of rational functions $\mathbb{R}(\boldsymbol{x})$. In this field, a natural object of study is the field of joint differential projective relative invariants $\mathcal{I}^{(\mu)}_n$, consisting of elements that  transform according to the rule
\[
F(g \cdot \boldsymbol{x}) = \mu(g, \boldsymbol{x})^\omega \cdot F(\boldsymbol{x}), \quad \forall g \in G,\ \boldsymbol{x} \in \mathcal{M},
\]
where $\mu: G \times \mathcal{M} \to \mathbb{R}^\times$ is a non-vanishing multiplier and $\omega \in \mathbb{Q}$ is the \textit{weight} of the invariant. Invariants of zero weight generate the subfield $\mathcal{I}_{n,0}$ of \emph{absolute} invariants. In this paper, we focus on the case when the multiplier $\lambda(g, \boldsymbol{x})$ is the Jacobian of the projective transformation $g$, and the corresponding field of relative invariants is denoted simply by $\mathcal{I}_n$.

The problem we address here differs fundamentally from the classical problem of constructing joint projective differential invariants, which dates back to works from the late 19th century~\cite{Halphen1878}--\cite{Bouton1898}. In the classical approach, the objects of study are geometric submanifolds — parameterized curves or surfaces — and the corresponding invariants are functions that remain unchanged under the group action on the immersed submanifold in a certain ambient space. Thus, such invariants depend on the chosen parametrization or, at the very least, on the geometric structure of the submanifold.

In contrast, our work considers the action of the projective group $PGL(3, \mathbb{R})$ on scalar functions and their first derivatives at a finite set of points. That is, we deal not with invariants of submanifolds, but with invariants of the \emph{values of the function and its derivatives} at selected points — without invoking a global parametrization or geometric object.

Such invariants naturally arise in computer vision problems, where objects are represented not as smooth submanifolds but as raster images, i.e., discretized scalar fields. In this context, the function $u(x, y)$ is interpreted as the intensity function of a digital image, and its values at points $(x_i, y_i)$ represent the intensities of corresponding pixels. The partial derivatives $p_i = \partial_x u(x_i, y_i)$ and $q_i = \partial_y u(x_i, y_i)$ are approximated using local operators such as Sobel or Prewitt filters.

Moreover, knowledge of relative invariants of the projective group allows us to construct global numerical image descriptors that remain invariant under $PGL(3)$ transformations. Identifying an image with a smooth integrable function $u\colon \mathbb{R}^2 \to \mathbb{R}$, consider $F \in \mathcal{I}_n$ — a relative invariant with a \emph{Jacobian} multiplier and of weight $-1$. Then the $2n$-fold integral
\[
\int_{\mathbb{R}^{2n}} F(x_1,y_1,\dots,x_n,y_n) \,dx_1\,dy_1 \cdots dx_n\,dy_n,
\]
preserves its value under projective deformations of the image, i.e., it is a \textit{projectively invariant integral descriptor} of the image. Such image global hand-made features are crucial for pattern recognition tasks, where it is important to have features robust to geometric transformations of the scene. Complete systems of integral invariants have been thoroughly studied for the affine group and its subgroups~\cite{FSB}, but for the projective group, this problem remains open and requires new theoretical tools.

For the case of a single point, the projective differential invariants were recently fully described in~\cite{Olver2023} using the method of moving frames. However, the obtained invariants depend on higher-order derivatives, which makes them numerically unstable and unreliable for practical computations. Therefore, particular interest is given to invariants that depend only on first-order derivatives. For a configuration of two points, such invariants do not exist, whereas for configurations of three and four points, a single basic first-order differential invariant was constructed in~\cite{Wang}; see also~\cite{5}, \cite{1}, \cite{Open}.

In a recent article by the author~\cite{B-2025}, using a purely geometric approach, the problem of constructing joint first-order projective differential invariants was completely solved: for arbitrary $n$, explicit minimal generating sets of the fields of absolute and relative invariants $\mathcal{I}_{n,0}$ and $\mathcal{I}_n$ were found. Since in the aforementioned work the explicit formulas for the invariants were essentially derived heuristically, in this paper we return to the same problem, proposing an alternative — systematic, conceptually transparent, and effective — method of its solution: the method of \emph{moving frames}.

The method of moving frames, in its modern formulation developed in~\cite{Olver1999-1, Olver1999-2, Olver2007, Olver2011}; see also~\cite{Man}, is a universal tool for constructing invariants of Lie group actions on manifolds. Its fundamental advantage lies in the automatic guarantee of functional independence and minimality of the constructed invariants — unlike classical methods, which do not provide such guarantees.

It is worth noting that, as often happens, the pattern recognition community independently of mathematicians developed a simplified version of the moving frame method for subgroups of the affine group, known as the \textit{image normalization} method. This approach is widely used in image analysis as an effective tool for constructing affine image invariants. A survey of the image normalization method and its applications can be found in~\cite{Rot, FSB}.

In this paper, we show that applying the moving frame method to the description of the fields $\mathcal{I}_n$ and $\mathcal{I}_{n,0}$ significantly simplifies their construction. Moreover, it turns out that the invariantization of the Jacobian yields an explicit \emph{primitive element} of the algebraic extension $\mathcal{I}_n / \mathcal{I}_{n,0}$, while the invariantization of the cochain complex $C^\bullet$ leads to an explicit construction of a \emph{contracting homotopy operator}, from which the known fact of vanishing higher cohomology of the free projective group action follows constructively.

Thus, the proposed approach provides a complete system of absolute invariants in a compact, minimal, and structurally transparent form.

\medskip

The article is structured as follows. Section~2 provides a brief introduction to the method of moving frames, which is a constructive approach to building the field of invariants. 
Section~3 constructs a global moving frame for the projective group action on $\mathcal{M}$ and, using it, finds a minimal generating set for the field of absolute joint first-order projective differential invariants $\mathcal{I}_{n,0}$. 
Section~4 gives an explicit description of the field $\mathcal{I}_n$ of relative joint first-order differential invariants as a simple algebraic extension of the field $\mathcal{I}_{n,0}$. The primitive element of the extension is simply obtained from the moving frame invariantized Jacobian..  Section~4 carries out the construction of an invariantized contracting homotopy for the cochain complex $C^\bullet$, which allows a constructive proof of the vanishing of higher cohomology. This is done using a global right moving frame that provides explicit formulas for the homotopy operator.

\section{The Moving Frame Method}

In this section, let $\mathcal{M}$ be an arbitrary finite-dimensional manifold.  
The moving frame method is a constructive approach to the computation of the invariant field $\mathcal{O}(\mathcal{M})^G$ for a smooth action of an $r$-parameter Lie group $G$ on the manifold $\mathcal{M}$. Its power lies in reducing the construction of invariants to solving explicit, typically polynomial, equations for the group parameters, thereby shifting the focus from the representation $G$ to the geometry of orbits on $\mathcal{M}$. This section gives a brief exposition of the core constructions of the moving frame method needed for our subsequent analysis.

\subsection{Description of the Method}

We fix the class of functions $\mathcal{O}(\mathcal{M})$, the field of real rational functions on $\mathcal{M}$.  
A right action of $G$ on $\mathcal{M}$, $(g, \boldsymbol{x}) \mapsto g \cdot \boldsymbol{x}$, induces a contragredient action on functions:
\[
(g^{-1}F)(\boldsymbol{x}) = F(g \cdot \boldsymbol{x}), \qquad (g \in G,\ \boldsymbol{x} \in \mathcal{M}).
\]
The field of absolute invariants is then the fixed subfield:
\[
\mathcal{O}^G(\mathcal{M}) = \{\, F \in \mathcal{O}(\mathcal{M}) \ :\ g^{-1}F = F \quad \forall\, g \in G \,\},
\]
which is equivalent to the condition:
\[
F(g \cdot \boldsymbol{x}) = F(\boldsymbol{x}) \quad \forall\, g, \boldsymbol{x}.
\]

Thus, invariants are functions that take the same value at all points on a group orbit. The moving frame method constructs such functions via a suitable choice of an \emph{orbit cross-section} $\mathcal{K} \subset \mathcal{M}$ of co-dimension $r = \dim G$, which (locally) intersects each orbit transversely at exactly one point.

For each point $\boldsymbol{x} \in \mathcal{M}$, we seek an element $\rho(\boldsymbol{x}) \in G$ such that
\[
\rho(\boldsymbol{x}) \cdot \boldsymbol{x} \in \mathcal{K}.
\]
The resulting point $\rho(\boldsymbol{x}) \cdot \boldsymbol{x}$ is called the \emph{normalization} of $\boldsymbol{x}$ and depends only on the orbit to which $\boldsymbol{x}$ belongs. The smooth mapping that assigns to each point $\boldsymbol{x} \in \mathcal{M}$ an element $\rho(\boldsymbol{x}) \in G$ is called a (right) \textit{moving frame}. A moving frame exists in a neighborhood of a point $\boldsymbol{x}$ if and only if $G$ acts \textit{freely} and \textit{regularly} near $\boldsymbol{x}$.

Given local coordinates $\boldsymbol{x} = (x_1, \ldots, x_m)$ on $\mathcal{M}$, suppose the cross-section $\mathcal{K}$ is defined by the $r$ equations
\[
\mathcal{K} = \{\, x_1 = c_1,\ \dots,\ x_r = c_r \,\} \subset \mathcal{M},
\]
where $r = \dim G$ and $c_1, \dots, c_r$ are appropriately chosen constants. These equations fix $r$ coordinates of the normalized point. The associated right moving frame $\rho(\boldsymbol{x}) \in G$ is obtained by solving the \textit{normalization equations}
\[
(g \cdot \boldsymbol{x})_j = c_j, \qquad j = 1, \dots, r. \tag{$\ast$}
\]
This is a system of $r$ equations — typically polynomial — in $r$ unknowns, namely, the group parameters. Transversality combined with the Implicit Function Theorem implies the existence in some neighborhood $U \subset \mathcal{M}$ of a local solution to these algebraic equations. Then it defines $\rho(\boldsymbol{x})$ such that, for any point $\boldsymbol{x}$, the normalized point has coordinates of the form:
\[
\rho(\boldsymbol{x}) \cdot \boldsymbol{x} = \big(c_1, \dots, c_r,\ I_1(\boldsymbol{x}), \dots, I_{m - r}(\boldsymbol{x})\big),
\]
where the functions $I_1(\boldsymbol{x}), \dots, I_{m - r}(\boldsymbol{x})$, as will be shown below, are invariants of the group action.

The constructed moving frame satisfies the property of right $G$-equivariance — for all $g \in G$ and $\boldsymbol{x} \in U \cap g^{-1}U$, we have:
\[
\rho(g \cdot \boldsymbol{x}) = \rho(\boldsymbol{x})\, g^{-1}.
\]
Indeed, on one hand, by definition, $\rho(g \cdot \boldsymbol{x})$ maps $g \cdot \boldsymbol{x}$ to $\mathcal{K}$:
\[
\rho(g \cdot \boldsymbol{x}) \cdot (g \cdot \boldsymbol{x}) \in \mathcal{K}.
\]
On the other hand, since $\rho(\boldsymbol{x}) \cdot \boldsymbol{x} \in \mathcal{K}$, it follows that
\[
\big(\rho(\boldsymbol{x})\, g^{-1}\big) \cdot (g \cdot \boldsymbol{x}) = \rho(\boldsymbol{x}) \cdot \boldsymbol{x} \in \mathcal{K}.
\]
Since the solution to the normalization system is unique, and we have proposed two solutions, they must coincide:
\[
\rho(g \cdot \boldsymbol{x}) = \rho(\boldsymbol{x})\, g^{-1}.
\]

\medskip

It follows from the $G$-equivariance of $\rho$ that the coordinates of the normalized point are invariant. Let $\boldsymbol{y} = g \cdot \boldsymbol{x}$, then:
\[
\rho(\boldsymbol{y}) \cdot \boldsymbol{y}
= \rho(g \cdot \boldsymbol{x}) \cdot (g \cdot \boldsymbol{x})
= \big(\rho(\boldsymbol{x})\, g^{-1} \big) \cdot (g \cdot \boldsymbol{x}) 
= \rho(\boldsymbol{x}) \cdot \boldsymbol{x}.
\]
Hence, the normalized point remains unchanged along the orbit:
\[
I(\boldsymbol{y}) = I(\boldsymbol{x}).
\]

Therefore, the normalized coordinates not fixed by the cross-section,
\[
I_1(\boldsymbol{x}), \dots, I_{m - r}(\boldsymbol{x}),
\]
are \emph{invariants} of the $G$-action; that is, for all $g \in G$ and $k = 1,\dots,m - r$, we have:
\[
I_k(g \cdot \boldsymbol{x}) = I_k(\boldsymbol{x}).
\]

Similarly, for a left moving frame, the normalized point is $\rho(\boldsymbol{x})^{-1} \cdot \boldsymbol{x}$ and the equivariance condition takes the form:
\[
\rho(g \cdot \boldsymbol{x}) = g \rho(\boldsymbol{x}).
\]
If $g \cdot \boldsymbol{x}$ is a right-equivariant moving frame, then $(g \cdot \boldsymbol{x})^{-1}$ is left-equivariant, and vice versa.

The choice of a cross-section $\mathcal{K} \subset \mathcal{M}$ uniquely determines the moving frame $\rho$, which in turn initiates the process of \emph{invariantization} — a canonical transition from smooth functions to invariants.

The invariantization of a function $F : \mathcal{M} \to \mathbb{R}$ is defined by the formula
\[
\iota(F)(\boldsymbol{x}) := F\big(\rho(\boldsymbol{x}) \cdot \boldsymbol{x}\big).
\]
Thus, $\iota(F)$ is a $G$-invariant function that agrees with $F$ on the cross-section $\mathcal{K}$:
\[
\iota(F)\big|_{\mathcal{K}} = F\big|_{\mathcal{K}}.
\]
In particular, invariantizing the coordinate functions $x_j$ yields:
\[
\iota(x_j) = c_j,\quad j = 1,\dots,r,
\qquad
I_k := \iota(x_{r+k}),\quad k = 1,\dots,m-r,
\]
where $I_1,\dots,I_{m-r}$ are the \emph{fundamental invariants}, and $\iota(x_1),\dots,\iota(x_r)$ are the so-called \emph{phantom invariants}, equal to the fixed coordinates of the cross-section.

The \emph{Replacement Theorem} guarantees (locally) that any smooth invariant is a function of the fundamental invariants. Depending on the choice of cross-section $\mathcal{K}$, the set of fundamental invariants may vary, but any two such sets define the same field of $G$-invariants.

The number of fundamental invariants, i.e., the transcendence degree of the field $\mathcal O(\mathcal M)^G$ for a free group action, according to Rosenlicht’s theorem (cf.~\cite{Ros},~\cite{Sp}), is:
\[
\operatorname{trdeg} \mathcal O(\mathcal M)^G = \dim \mathcal M  - \dim G.
\]

If the action of $G$ on $\mathcal M$ is not free or not regular, a moving frame still exists for the prolonged action on jet spaces  or for the \emph{joint action} on Cartesian powers (joint invariants). In both approaches, a suitable choice of $n$ or $k$ renders the action (locally) free and regular, after which the standard normalization procedure applies.

A key advantage of the moving frame method over other methods for constructing invariants is that the resulting fundamental invariants are always functionally independent, and any other invariant is an analytic function of them. This is an exceptionally strong property. In classical invariant theory, the main difficulty lies not so much in constructing individual invariants — for which many effective algorithms exist — as in isolating a minimal rational or polynomially independent generating set. The moving frame method is unique in that it immediately, as a consequence of normalization, produces a complete system of functionally independent invariants, usually rational. To our knowledge, no other method of constructing invariants possesses this property.


\section{Joint First-Order Absolute Projective Differential Invariants}

In this section, we construct an explicit \emph{complete} generating system for the field of absolute joint projective differential invariants of the first order, denoted $\mathcal{I}_{n,0}$. Unlike the approach in~\cite{B-2025}, where heuristically chosen systems were proposed separately for small values $n = 2,3,4,5,6$ and for $n \geq 7$, and their functional independence was proved independently, we employ the method of moving frames: the choice of a cross-section immediately yields functionally independent fundamental invariants for any $n \ge 3$.

\subsection{Construction of the Moving Frame}

We consider the $n$-fold first-order joint jet space
\[
\mathcal{M}
\cong \big(\mathbb{R}^2\times\mathbb{R}^2\big)^{\!n},
\]
with coordinates
\[
\boldsymbol{x}=\big(x_i,y_i,p_i,q_i\big)_{i=1}^n,
\qquad p_i=\partial_x u|_{(x_i,y_i)},\;\; q_i=\partial_y u|_{(x_i,y_i)}.
\]
Let $\mathcal{I}_{n,0} = \mathcal{O}(\mathcal{M})^{PGL(3, \mathbb{R})}$. Denote
\[
T=\begin{pmatrix}
a_1 & a_2 & a_3\\
b_1 & b_2 & b_3\\
c_1 & c_2 & c_3
\end{pmatrix}\in GL(3,\mathbb{R}),
\quad g=[T]\in PGL(3,\mathbb{R}).
\]
Recall that for each pair of variables $(x_i,y_i), i = 1,2,\dots, n$, the group action has the form:
\begin{gather*}
g(x_i)=\widetilde{x}_i= \frac {x_i a_1 + y_i a_2 + a_3}{x_i c_1 + y_i c_2 + 1},\\
g(y_i)=\widetilde{y}_i= \frac {x_i b_1 + y_i b_2 + b_3}{x_i c_1 + y_i c_2 + 1}.
\end{gather*}

The first prolongations are given by:
\begin{align*}
\tilde p_i &= \frac{c_1 x_i + c_2 y_i + 1}{\det T} \left(
  -\det\begin{vmatrix} b_1 & b_2 \\ c_1 & c_2 \end{vmatrix} (p_i x_i + q_i y_i)
  + \det\begin{vmatrix} b_2 & b_3 \\ c_2 & c_3 \end{vmatrix} p_i
  + \det\begin{vmatrix} b_1 & b_3 \\ c_1 & c_3 \end{vmatrix} q_i
\right), \\[4pt]
\tilde q_i &= \frac{c_1 x_i + c_2 y_i + 1}{\det T} \left(
  \det\begin{vmatrix} a_1 & a_2 \\ c_1 & c_2 \end{vmatrix} (p_i x_i + q_i y_i)
  - \det\begin{vmatrix} a_2 & a_3 \\ c_2 & c_3 \end{vmatrix} p_i
  + \det\begin{vmatrix} a_1 & a_3 \\ c_1 & c_3 \end{vmatrix} q_i
\right),
\end{align*}
see e.g. \cite{B-2025}.

The group $PGL(3,\mathbb{R})$ has dimension 8. On $\mathcal{M}$, we choose a cross-section of co-dimension 8 by normalizing three points and one gradient vector:
\[
\mathcal{K} = \{ x_1 = 1,\ x_2 = 0,\ x_3 = 0,\ y_1 = 0,\ y_2 = 0,\ y_3 = 1,\ p_1 = 1,\ q_1 = 0 \}.
\]
Note that this cross-section is not symmetric with respect to the variables, so the resulting expressions for the group parameters and the invariants will also be asymmetric.

Geometrically, we fix a projective frame from three non-collinear points
\[
(X_1,X_2,X_3) = ((1,0), (0,0), (0,1))
\]
and set the orientation and scale of the axis at point $X_1$ via gradient normalization.

The requirement $g \cdot \boldsymbol{x} \in \mathcal{K}$ is equivalent to the following system of equations for the group parameters:

$$
\begin{cases}
\displaystyle {\frac {a_{{1}}x_{{1}}+a_{{2}}y_{{1}}+a_{{3}}}{c_{{1}}x_{{1}}+c_{{2}}y
_{{1}}+1}}
=1, \qquad  \displaystyle {\frac {b_{{1}}x_{{3}}+b_{{2}}y_{{3}}+b_{{3}}}{c_{{1}}x_{{3}}+c_{{2}}y
_{{3}}+1}}=1,\\

a_{{1}}x_{{3}}+a_{{2}}y_{{3}}+a_{{3}}=0, \qquad
b_{{1}}x_{{1}}+b_{{2}}y_{{1}}+b_{{3}}=0,\\
b_{{1}}x_{{2}}+b_{{2}}y_{{2}}+b_{{3}}=0, \qquad
a_{{1}}x_{{2}}+a_{{2}}y_{{2}}+a_{{3}}=0,\\
\displaystyle \frac{c_{{1}}x_{{1}}+c_{{2}}y_{{1}}+1}{\delta}(\left( b_{{1}}x_{{1}}c_{{2}}-b_{{2}}x_{{1}}c_{{1}}+b_{{3}}c_{{2}}-b_{{2}} \right) p_{{1}}+ \left( y_{{1}}b_{{1}}c_{{2}}-b_{{2}}y_{{1}}c_{{1
}}-b_{{3}}c_{{1}}+b_{{1}} \right) q_{{1}})=1,\\
\displaystyle \left( a_{{1}}x_{{1}}c_{{2}}-a_{{2}}x_{{1}}c_{{1}}+a_{{3}}c_{{2}}-a_{{2}} \right) p_{{1}}+ \left( y_{{1}}a_{{1}}c_{{2}}-a_{{2}}y_{{1}}c_{{1
}}-a_{{3}}c_{{1}}+a_{{1}} \right) q_{{1}}=0.
\end{cases}
$$

We have omitted the denominators for expressions that are set to zero.

Solving this system, we obtain the values of the group parameters:

\begin{gather*}
a_{{1}}={\frac {y_{{2}}-y_{{3}}}{\delta\,q_1 y_{{1}}+\delta\,p_1 x_{{1}}+x_{{2}}y_{{3}}-x_{{3}}y_{{2}}}}
\\
a_{{2}}={\frac {-x_{{2}}+x_{{3}}}{\delta\,q_1y_{{1}}+\delta\,u_{
{1,0}}x_{{1}}+x_{{2}}y_{{3}}-x_{{3}}y_{{2}}}}
\\
a_{{3}}={\frac {x_{{2}}y_{{3}}-x_{{3}}y_{{2}}}{\delta\,q_1y_{{1}
}+\delta\,p_1x_{{1}}+x_{{2}}y_{{3}}-x_{{3}}y_{{2}}}}
\\
b_{{1}}={\frac { \left( y_{{1}}-y_{{2}} \right)  \left( y_{{1}}-y_{{3}
} \right) q_1+ \left( y_{{1}}-y_{{2}} \right)  \left( x_{{1}}-x_
{{3}} \right) p_1}{\delta\,q_1y_{{1}}+\delta\,p_1x_{
{1}}+x_{{2}}y_{{3}}-x_{{3}}y_{{2}}}}
\\
b_{{2}}={\frac {- \left( x_{{1}}-x_{{2}} \right)  \left( y_{{1}}-y_{{3
}} \right) q_1- \left( x_{{1}}-x_{{2}} \right)  \left( x_{{1}}-x
_{{3}} \right) p_1}{\delta\,q_1y_{{1}}+\delta\,p_1x_
{{1}}+x_{{2}}y_{{3}}-x_{{3}}y_{{2}}}}
\\
b_{{3}}={\frac { \left( x_{{1}}y_{{2}}-x_{{2}}y_{{1}} \right)  \left( 
y_{{1}}-y_{{3}} \right) q_1+ \left( x_{{1}}y_{{2}}-x_{{2}}y_{{1}
} \right)  \left( x_{{1}}-x_{{3}} \right) p_1}{\delta\,q_1
y_{{1}}+\delta\,p_1x_{{1}}+x_{{2}}y_{{3}}-x_{{3}}y_{{2}}}}
\\
c_1=\frac{y_{{2}}-y_{{3}}-p_1{\delta}}{{\delta}(q_1y_{{1}}+p_1x_{{1}})+x_{{2}}y_{{3
}}-x_{{3}}y_{{2}}
}\\
c_2=\frac{-q_1{\delta}-x_{{2}}+x_{{3}}}{{\delta}(q_1y_{{1}}+p_1x_{{1}})+x_{{2}}y_{{3
}}-x_{{3}}y_{{2}}
}
\end{gather*}
where  $\delta=\delta_{1,2,3}$  and   $ \delta_{i,j,k}=\begin{vmatrix} x_i & x_j & x_k \\ y_i & y_j & y_k \\ 1 & 1 & 1 \end{vmatrix}.$

The obtained solution defines a global moving frame
\[
\rho:\ \mathcal{M} \to PGL(3,\mathbb{R}),
\qquad 
\rho(g\cdot\boldsymbol{x}) = \rho(\boldsymbol{x})\,g^{-1}.
\]

The normalized point is the result of applying $\rho$ to the original coordinates:
\[
\rho(\boldsymbol{x}) \cdot \boldsymbol{x}
=
\Big(
1,0,\,1,0, 0,0,\iota( p_2),\iota(q_2), 0,1,\,\iota( p_3),\iota(q_3), 
\underbrace{\iota(x_i),\iota( y_i),\iota( p_i),\iota(q_i)}_{i=4,\dots,n}
\Big),
\]
where $\iota$ denotes invariantization defined earlier.

Since the cross-section uses the first three points, it is convenient for further descriptions to treat separately the cases $n=3$ and $n \ge 4$.

\subsection{The Case \(n=3\)}
Since $\dim \mathcal{M}=4\cdot 3=12$, we have
\[
\operatorname{trdeg}\,\mathcal{I}_{3,0}=12-8=4.
\]
Eight coordinates are used for normalization, so after substituting the frame parameters, the remaining four coordinates are invariantized into four fundamental invariants. It is convenient to introduce the notation
\[
\Phi^{(k)}_{ij}:=(x_i-x_j)p_k+(y_i-y_j)q_k,\qquad i\ne j.
\]

\begin{theorem}
\label{thm:n3}
The field of absolute first-order joint projective invariants is the rational function field
\[
\mathcal{I}_{3,0}
\;=\;
\mathbb{R}\bigl(\zeta_{12},\zeta_{23},\zeta_{13},\tau \bigr),
\]
where
$$
\begin{aligned}
&\zeta_{12}=\Phi^{(1)}_{12} \Phi^{(2)}_{12}, 
\zeta_{23}=\Phi^{(2)}_{23} \Phi^{(3)}_{23},\\
&\zeta_{13}=\Phi^{(1)}_{13} \Phi^{(3)}_{13},
\tau=\Phi^{(1)}_{13}\Phi^{(3)}_{23} \Phi^{(2)}_{12},
\end{aligned}
$$
are algebraically independent absolute first-order joint projective invariants. 
\end{theorem}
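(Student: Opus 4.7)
The plan is to combine Rosenlicht's theorem with the moving frame constructed in the previous subsection. Since $\dim \mathcal{M} = 12$ and $\dim PGL(3,\mathbb{R}) = 8$, Rosenlicht's theorem gives $\operatorname{trdeg}_\mathbb{R} \mathcal{I}_{3,0} = 4$, and the Replacement Theorem combined with the moving-frame construction guarantees that the four normalized coordinates $\iota(p_2), \iota(q_2), \iota(p_3), \iota(q_3)$ are algebraically independent and rationally generate $\mathcal{I}_{3,0}$. Hence the theorem reduces to two claims: (a) $\zeta_{12}, \zeta_{13}, \zeta_{23}, \tau$ are absolute invariants, and (b) the map $(\iota(p_2), \iota(q_2), \iota(p_3), \iota(q_3)) \mapsto (\zeta_{12}, \zeta_{13}, \zeta_{23}, \tau)$ is a birational equivalence of rational function fields.

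For (a), the strategy is direct verification. Substituting the prolongation formulas for $\tilde p_k, \tilde q_k$ together with the projective differences $\tilde x_i - \tilde x_j, \tilde y_i - \tilde y_j$ into each $\Phi^{(k)}_{ij}$ shows that $\Phi^{(k)}_{ij}$ transforms with a nontrivial multiplier depending only on the projective denominator factors $w_\ell = c_1 x_\ell + c_2 y_\ell + 1$ at $\ell = i, j, k$ and on $\det T$. The products defining $\zeta_{ij}$ and $\tau$ are arranged so that each $w_\ell$ and each $\det T$ appears with total exponent zero; confirming this cancellation is the core computational step.

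For (b), I would exploit the multiplicativity and $\mathbb{R}$-linearity of the invariantization operator $\iota$. Using the cross-section values $\iota(x_1) = \iota(p_1) = 1$, $\iota(y_1) = \iota(q_1) = \iota(x_2) = \iota(y_2) = \iota(x_3) = 0$, and $\iota(y_3) = 1$, a direct evaluation of each elementary factor gives
\begin{gather*}
\iota(\Phi^{(1)}_{12}) = 1, \quad \iota(\Phi^{(2)}_{12}) = \iota(p_2), \quad \iota(\Phi^{(2)}_{23}) = -\iota(q_2), \\
\iota(\Phi^{(3)}_{23}) = -\iota(q_3), \quad \iota(\Phi^{(1)}_{13}) = 1, \quad \iota(\Phi^{(3)}_{13}) = \iota(p_3) - \iota(q_3).
\end{gather*}
By (a) the operator $\iota$ fixes $\zeta_{ij}$ and $\tau$, and multiplicativity then yields
\[
\zeta_{12} = \iota(p_2), \quad \zeta_{23} = \iota(q_2)\iota(q_3), \quad \zeta_{13} = \iota(p_3) - \iota(q_3), \quad \tau = -\iota(p_2)\iota(q_3).
\]
This system admits a unique rational solution for the fundamental invariants in terms of $\zeta_{12}, \zeta_{13}, \zeta_{23}, \tau$, which establishes (b). Algebraic independence of $\zeta_{12}, \zeta_{13}, \zeta_{23}, \tau$ then follows from that of the fundamental invariants together with the invertibility of this change of variables.

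The principal technical hurdle is step (a): although each $\zeta_{ij}$ is a product of two $\Phi$-factors based at the same point-pair and its invariance is comparatively straightforward, the triple product $\tau$ interlaces three distinct evaluation points with three distinct point-pairs, so the cancellation of all $w_\ell$ and $\det T$ factors requires careful symbolic bookkeeping that is most reliably entrusted to a computer algebra system.
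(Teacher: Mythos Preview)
Your proposal is correct, and the route you take is closely related to the paper's but with a genuinely different emphasis.  The paper substitutes the explicit moving-frame parameters $a_1,\ldots,c_2$ solved in Section~3.1 into the prolongation formulas, obtaining closed expressions for $\iota(p_2),\iota(q_2),\iota(p_3),\iota(q_3)$ as rational functions of the $\Phi^{(k)}_{ij}$, and only then recognises the simpler combinations $\zeta_{12},\zeta_{13},\zeta_{23},\tau$.  You invert this order: you first establish invariance of the $\zeta$'s and $\tau$ directly, and then exploit the fact that $\iota$ is an $\mathbb{R}$-algebra homomorphism fixing invariants to read off the birational relation from the cross-section values alone, never touching the explicit frame parameters.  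Your approach is more conceptual and lighter computationally; the paper's approach has the compensating virtue of exhibiting the fundamental invariants $\iota(p_i),\iota(q_i)$ explicitly, which feeds into the $n\ge4$ cases.

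One remark: you overestimate the difficulty of step~(a).  The paper records (just after its proof) that for $k\in\{i,j\}$ the building block $\Phi^{(k)}_{ij}$ transforms by the \emph{pure} ratio $w_k/w_{\text{other}}$ with $w_\ell=c_1x_\ell+c_2y_\ell+1$; in particular no factor of $\det T$ survives.  The invariance of each $\zeta_{ij}=\Phi^{(i)}_{ij}\Phi^{(j)}_{ij}$ is then immediate from $(w_i/w_j)(w_j/w_i)=1$, and for $\tau=\Phi^{(1)}_{13}\Phi^{(3)}_{23}\Phi^{(2)}_{12}$ the three ratios $(w_1/w_3)(w_3/w_2)(w_2/w_1)$ telescope to~$1$ by inspection.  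So no computer algebra is needed: a single verification of the transformation law for $\Phi^{(i)}_{ij}$ suffices, and that is a short hand calculation.
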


\begin{proof}
Invariantization of the four “free” coordinates $p_2,q_2,p_3,q_3$ (i.e., substitution of the parameters $\rho$ into the prolongation formulas) gives:

\begin{gather*}
\iota( p_2)=   \left(  \left( x_{{1}}-x_{{2}} \right) p_{{
1}}+ \left( y_{{1}}-y_{{2}} \right) q_{{1}} \right) \left(  \left( x_{{1}}-x_{{2}} \right) p_{{2}}+ \left( y_{{1}}-y_{{2}} \right) q_{{2}} \right)=\Phi^{(1)}_{12} \Phi^{(2)}_{12} \,\\
\iota(q_2)=-{\frac { \left(  \left( x_{{1}}-x_{{2}} \right) p_{{1}}+ \left( y_{{1
}}-y_{{2}} \right) q_{{1}} \right)  \left(  \left( x_{{2}}-x_{{3}}
 \right) p_{{2}}+ \left( y_{{2}}-y_{{3}} \right) q_{{2}} \right) }{
 \left( x_{{1}}-x_{{3}} \right) p_{{1}}+ \left( y_{{1}}-y_{{3}}
 \right) q_{{1}}}}=\frac{\Phi^{(1)}_{12} \Phi^{(2)}_{23} }{\Phi^{(1)}_{13} }
,\\
\iota( p_3)=\frac{  \left(  \left( x_{{1}}-x_{{3}} \right) p_{{3}}+ \left( y_{{1}}-y_{{3}
} \right) q_{{3}} \right)  \left(  \left( x_{{1}}-x_{{2}} \right) p_{{
1}}+ \left( y_{{1}}-y_{{2}} \right) q_{{1}} \right) - \left( x_{{2}}-x
_{{3}} \right) p_{{3}}- \left( y_{{2}}-y_{{3}} \right) q_{{3}}
}{\left( x_{{1}}-x_{{2}} \right) p_{{1}}+ \left( y_{{1}}-y_{{2}} \right) q_{{1}}
} \times \\ \times  (\left( x_{{1}}-x_{{3}} \right) p_{{1}}+ \left( y_{{1}}-y_{{3}} \right) q_{{1}})=\frac{\Phi^{(1)}_{13} \left( \Phi^{(3)}_{13} \Phi^{(1)}_{12} -\Phi^{(3)}_{23}  \right) }{\Phi^{(1)}_{12} },\\
\iota( q_3)={\frac { \left(  \left( x_{{2}}-x_{{3}} \right) p_{{3}}+ \left( y_{{2}}-y_{{3}} \right) q_{{3}} \right)  \left(  \left( x_{{1}}-x_{{3}}
 \right) p_{{1}}+ \left( y_{{1}}-y_{{3}} \right) q_{{1}} \right) }{
 \left( x_{{1}}-x_{{2}} \right) p_{{1}}+ \left( y_{{1}}-y_{{2}}
 \right) q_{{1}}}}=\frac{\Phi^{(1)}_{13}\Phi^{(3)}_{23}}{\Phi^{(1)}_{12}}.
\end{gather*}

These invariants are algebraically independent and generate the field $\mathcal{I}_{3,0}$. Let us choose a simpler basis of the field. One easily sees the following relations:
\begin{gather*}
\iota(\tilde q_2) \iota(\tilde q_3)=  \Phi^{(2)}_{23} \Phi^{(3)}_{23},\\
\iota(\tilde p_3)=\Phi^{(1)}_{13} \Phi^{(3)}_{13} -\iota(\tilde q_3),\\
\iota(\tilde p_2) \iota(\tilde q_3)= \Phi^{(1)}_{13}\Phi^{(3)}_{23} \Phi^{(2)}_{12}
\end{gather*}
These relations allow us to introduce new, simpler invariants:
\begin{gather*}
\zeta_{1}^{(2)}=\Phi^{(1)}_{12} \Phi^{(2)}_{12}, \quad
\zeta_{1}^{(3)}=\Phi^{(1)}_{13} \Phi^{(3)}_{13}, \quad
\zeta_{2}^{(3)}=\Phi^{(2)}_{23} \Phi^{(3)}_{23}, \quad
\tau=\Phi^{(1)}_{13}\Phi^{(3)}_{23} \Phi^{(2)}_{12},
\end{gather*}
which are related to the invariants $\iota(\tilde p_2),\iota(\tilde q_2),\iota(\tilde p_3),\iota(\tilde q_3)$ via a non-degenerate transformation. Therefore, the functional independence of $\iota(p_2),\iota(q_2),\iota(p_3),\iota(q_3)$ is equivalent to the functional independence of $\zeta_{1}^{2},\zeta_{1}^{3},\zeta_{2}^{3},\tau$, which proves the structure of the field $\mathcal{I}_{3,0}$.
\end{proof}

Although the invariance of the given expressions is ensured by the moving frame method itself, it is useful to see the mechanism of its appearance — a simple mutual cancellation of the multipliers of the projective action. Direct computation shows that
\begin{gather*}
g(\Phi^{(i)}_{i,j})= \frac{c_1 x_i+c_2 x_i+1}{c_1 x_j+c_2 x_j+1}  \, \Phi^{(i)}_{i,j},\\
g(\Phi^{(j)}_{i,j})= \frac{c_1 x_j+c_2 x_j+1}{c_1 x_i+c_2 x_i+1} \, \Phi^{(j)}_{i,j}.
\end{gather*}

Hence, the invariance of the required combinations follows immediately due to mutual cancellation of the multipliers, for example:
\begin{gather*}
g(\tau)=g(\Phi^{(1)}_{13}) g(\Phi^{(3)}_{23}) g(\Phi^{(2)}_{12})= \frac{c_1 x_1+c_2 x_1+1}{c_1 x_3+c_2 x_3+1} \cdot \frac{c_1 x_3+c_2 x_3+1}{c_1 x_2+c_2 x_2+1} \times \\ \times \frac{c_1 x_2+c_2 x_2+1}{c_1 x_1+c_2 x_1+1} \cdot \Phi^{(1)}_{13}\Phi^{(3)}_{23} \Phi^{(2)}_{12}=\Phi^{(1)}_{13}\Phi^{(3)}_{23} \Phi^{(2)}_{12}.
\end{gather*}

In the paper \cite{B-2025}, using a geometric approach, an almost identical polynomial basis of the field $\mathcal I_{3,0}$ is found in the form of determinants:
\[
\begin{aligned}
\zeta_{1}^{(2)} &=
\left| \begin {array}{ccc} p_{{1}}&q_{{1}}&-p_{{1}}x_{{1}}-q_{{1}}y_{{1}}\\ \noalign{\medskip}p_{{2}}&q_{{2}}&-p_{{2}}x_{{2}}-q_{{2}}y_{{2}
}\\ \noalign{\medskip}y_{{1}}-y_{{2}}&x_{{2}}-x_{{1}}&x_{{1}}y_{{2}}-x
_{{2}}y_{{1}}\end {array} \right|,\\
\zeta_{2}^{(3)} &=
 \left| \begin {array}{ccc} p_{{2}}&q_{{2}}&-p_{{2}}x_{{2}}-q_{{2}}y_{{2}}\\ \noalign{\medskip}p_{{3}}&q_{{3}}&-p_{{3}}x_{{3}}-q_{{3}}y_{{3}
}\\ \noalign{\medskip}y_{{2}}-y_{{3}}&-x_{{2}}+x_{{3}}&x_{{2}}y_{{3}}-
x_{{3}}y_{{2}}\end {array} \right| 
,\\
\zeta_{1}^{(3)} &=
 \left| \begin {array}{ccc} p_{{1}}&q_{{1}}&-p_{{1}}x_{{1}}-q_{{1}}y_{{1}}\\ \noalign{\medskip}p_{{3}}&q_{{3}}&-p_{{3}}x_{{3}}-q_{{3}}y_{{3}
}\\ \noalign{\medskip}y_{{1}}-y_{{3}}&x_{{3}}-x_{{1}}&x_{{1}}y_{{3}}-x
_{{3}}y_{{1}}\end {array} \right|,\\[6pt]
\end{aligned}
\]
$$
\tau'= 
\begin{vmatrix}
 p_1 & q_1 & p_1x_1+q_1y_1\\
 p_2 & q_2 & p_2x_2+q_2y_2\\
 p_3 & q_3 & p_3x_3+q_3y_3
\end{vmatrix}\cdot \begin{vmatrix}
 x_1 & y_1 & 1\\
 x_2 & y_2 & 1\\
 x_3 & y_3 & 1
\end{vmatrix}.
$$

It can be shown that $\tau'$ is expressed via the basis as
\[
\tau'=\tau+\frac{\zeta_{1}^{(2)} \zeta_{1}^{(3)} \zeta_{2}^{(3)} }{\tau}.
\]

Thus, both bases — the geometric one from \cite{B-2025}  and the one obtained by the moving frame method — generate the same field \(\mathcal I_{3,0}\).


\subsection{The Case \boldmath{$n=4$}}

For four points in general position, we have:
\[
\operatorname{trdeg} \mathcal{I}_{4,0} = 8.
\]
The four invariants constructed in the case \(n=3\),
\[
\zeta_{1}^{(2)},\quad \zeta_{2}^{(3)},\quad \zeta_{1}^{(3)},\quad \tau,
\]
remain invariants also for \(n=4\). The additional four invariants are obtained by invariantization of the coordinates of the fourth block:
\[
\iota(x_4),\qquad \iota(y_4),\qquad \iota(p_4),\qquad \iota(q_4),
\]
which can be expressed via determinants and the functions \(\Phi\). We have:
\begin{gather*}
\iota(x_4)=\frac{\delta_{234}}{\delta_{234}+\delta_{123} \Phi^{(1)}_{1,4}},\\
\iota(y_4)=\frac{\delta_{124} \Phi^{(1)}_{1,3}}{\delta_{234}+\delta_{123} \Phi^{(1)}_{1,4}},\\
\iota(p_4)=\frac{(\delta_{234}+\delta_{123} \Phi^{(1)}_{1,2}) (\Phi^{(4)}_{1,4} \Phi^{(1)}_{1,2}-\Phi^{(4)}_{2,4})}{\delta_{123} \Phi^{(1)}_{1,2}},\\
\iota(q_4)=\frac{(\delta_{234}+\delta_{123} \Phi^{(1)}_{1,2}) (\Phi^{(1)}_{1,4}\Phi^{(4)}_{2,3}+\delta_{234} (p_1 q_4-p_4 q_1))}{\delta_{123} \Phi^{(1)}_{1,2} \Phi^{(1)}_{1,3}}.
\end{gather*}

It is convenient to rewrite the first two formulas as:
\[
\frac{1}{\iota(x_4)}=1+\frac{\delta_{123}}{\delta_{234}}\,\Phi^{(1)}_{14},\qquad
\iota(y_4)=\frac{\delta_{124}}{\delta_{234}}\,\Phi^{(1)}_{13}\,\iota(x_4),
\]
and the last one in terms of \(\iota(p_4)\):
\[
\iota(q_4)=\frac{\ \Phi^{(1)}_{14}\,\Phi^{(4)}_{23}+\delta_{234}(p_1q_4-p_4q_1)\ }{\ \Phi^{(1)}_{13}\,\big(\Phi^{(4)}_{14}\,\Phi^{(1)}_{12}-\Phi^{(4)}_{24}\big)\ }\;\iota(p_4).
\]

Let us introduce notation for the four new (rational) invariants:
\[
\zeta^{(4)}_1=\frac{\delta_{123}}{\delta_{234}}\,\Phi^{(1)}_{14},\qquad
\zeta^{(4)}_2=\frac{\delta_{124}}{\delta_{234}}\,\Phi^{(1)}_{13},\qquad
\zeta^{(4)}_3=\frac{\big(\delta_{234}+\delta_{123}\,\Phi^{(1)}_{12}\big)\big(\Phi^{(4)}_{14}\,\Phi^{(1)}_{12}-\Phi^{(4)}_{24}\big)}{\delta_{123}\,\Phi^{(1)}_{12}},
\]
\[
\zeta^{(4)}_4=\frac{\ \Phi^{(1)}_{14}\,\Phi^{(4)}_{23}+\delta_{234}(p_1q_4-p_4q_1)\ }{\ \Phi^{(1)}_{13}\,\big(\Phi^{(4)}_{14}\,\Phi^{(1)}_{12}-\Phi^{(4)}_{24}\big)\ }.
\]
By construction, on the domain of general position, these four quantities are functionally independent and, together with the basis for \(n=3\), form a complete generating set for the field of invariants.

This leads to the following description of the field of invariants:
\begin{theorem}
The field of absolute first-order joint projective invariants is the purely transcendental field
\[
\mathcal{I}_{4,0} \;=\;
\mathbb{R}\bigl(
  \zeta_{12},\zeta_{13},\zeta_{14},\tau,\xi^{(4)}_1,\xi^{(4)}_2,\xi^{(4)}_3,\xi^{(4)}_4\bigr).
\]
The eight listed invariants are algebraically independent and form a transcendence basis for the field \(\mathcal{I}_{4,0}\).
\end{theorem}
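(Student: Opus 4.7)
The plan is to combine Rosenlicht's transcendence-degree formula with the Replacement Theorem for moving frames. Since $\dim\mathcal{M}=16$ and $\dim PGL(3,\mathbb{R})=8$, we have $\operatorname{trdeg}\mathcal{I}_{4,0}=8$, so any eight functionally independent rational invariants that lie in $\mathcal{I}_{4,0}$ automatically form a transcendence basis; once membership in $\mathcal{I}_{4,0}$ is known, the Replacement Theorem applied to the global moving frame $\rho$ of Section~3.1 will upgrade this to generation of the whole field.

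First, I would take as a baseline the eight fundamental invariants delivered directly by the cross-section chosen in Section~3.1, namely $\iota(p_2),\iota(q_2),\iota(p_3),\iota(q_3)$ together with $\iota(x_4),\iota(y_4),\iota(p_4),\iota(q_4)$; by the general theory of moving frames these are functionally independent and generate $\mathcal{I}_{4,0}$. My aim is then to show that the eight invariants listed in the theorem arise from these fundamental ones through an invertible rational transformation. The quantities $\zeta_{12},\zeta_{13},\zeta_{14},\tau$ are invariant by the cancellation mechanism exhibited after Theorem~\ref{thm:n3}: each factor $\Phi^{(i)}_{ij}$ transforms by $(c_1x_i+c_2y_i+1)/(c_1x_j+c_2y_j+1)$, so the products $\Phi^{(i)}_{ij}\Phi^{(j)}_{ij}$ and the cyclic triple $\tau$ lie in $\mathcal{I}_{4,0}$, and substituting the explicit frame parameters expresses them rationally in terms of $\iota(p_2),\iota(q_2),\iota(p_3),\iota(q_3)$ and (for $\zeta_{14}$) $\iota(p_4),\iota(q_4)$. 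The four remaining invariants $\xi^{(4)}_1,\ldots,\xi^{(4)}_4$ are precisely the rational combinations displayed immediately before the theorem statement, and the relations
\[
\frac{1}{\iota(x_4)}=1+\xi^{(4)}_1,\qquad \iota(y_4)=\xi^{(4)}_2\,\iota(x_4),
\]
together with the analogous explicit formulas for $\iota(p_4),\iota(q_4)$ in terms of $\xi^{(4)}_3,\xi^{(4)}_4$ and quantities from the first block, arrange the substitution into triangular form.

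The main obstacle is verifying that this triangular substitution is invertible on the Zariski-open locus of general position, i.e.\ that the diagonal entries of the change-of-coordinate matrix are not identically zero. Concretely, this reduces to checking that the denominators appearing in the $\xi^{(4)}_i$ (such as $\delta_{234}$ and $\Phi^{(1)}_{13}\bigl(\Phi^{(4)}_{14}\Phi^{(1)}_{12}-\Phi^{(4)}_{24}\bigr)$) are nontrivial polynomials, and that the sub-transformation on $\{\zeta_{12},\zeta_{13},\zeta_{14},\tau\}$ inherits the nondegeneracy established in the proof of Theorem~\ref{thm:n3} once the explicit frame parameters are substituted. Once this triangular invertibility is confirmed, algebraic independence of the eight listed invariants follows, the Replacement Theorem yields $\mathcal{I}_{4,0}=\mathbb{R}\bigl(\zeta_{12},\zeta_{13},\zeta_{14},\tau,\xi^{(4)}_1,\xi^{(4)}_2,\xi^{(4)}_3,\xi^{(4)}_4\bigr)$, and the purely transcendental character of the extension is an automatic consequence of the equality of the transcendence degree with the number of generators.
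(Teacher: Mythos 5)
Your overall route is the paper's route: the paper gives no separate proof of this theorem, and its justification is precisely the chain you outline (Rosenlicht's count $\operatorname{trdeg}\mathcal{I}_{4,0}=16-8=8$, the eight fundamental invariants $\iota(p_2),\iota(q_2),\iota(p_3),\iota(q_3),\iota(x_4),\iota(y_4),\iota(p_4),\iota(q_4)$ produced by the chosen cross-section, and a rational triangular change of variables to the listed generators).

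However, the invertibility check that you correctly single out as the main obstacle in fact fails for the generating set exactly as written, and the failing step is your claim that the block $\{\zeta_{12},\zeta_{13},\zeta_{14},\tau\}$ ``inherits the nondegeneracy established in the proof of Theorem~\ref{thm:n3}.'' That theorem's basis contains $\zeta_{23}=\Phi^{(2)}_{23}\Phi^{(3)}_{23}$, not $\zeta_{14}$, and $\zeta_{23}$ is precisely the generator that carries the information of $\iota(q_2)$. Indeed, each of the eight functions $\zeta_{12},\zeta_{13},\zeta_{14},\tau,\xi^{(4)}_1,\dots,\xi^{(4)}_4$ depends on the pair $(p_2,q_2)$ only through the single linear form $\Phi^{(2)}_{12}$ (and $\zeta_{13}$, $\zeta_{14}$ and all the $\xi^{(4)}_i$ do not involve $p_2,q_2$ at all), so all eight lie in the subfield $L=\mathbb{R}\bigl(x_1,y_1,\dots,x_4,y_4,\,p_1,q_1,p_3,q_3,p_4,q_4,\,\Phi^{(2)}_{12}\bigr)$. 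Since the full coordinate field is purely transcendental of degree one over $L$ while $\zeta_{23}\notin L$, the absolute invariant $\zeta_{23}$ is transcendental over $L$ and hence over the field generated by the eight listed elements; that field therefore has transcendence degree at most $7$, the eight elements are algebraically dependent, and $\iota(q_2)$ cannot be recovered from your triangular substitution. The list in the statement is evidently a misprint for $\{\zeta_{12},\zeta_{23},\zeta_{13},\tau,\xi^{(4)}_1,\dots,\xi^{(4)}_4\}$, which is what the paragraph preceding the theorem actually asserts; with $\zeta_{23}$ restored your argument closes correctly and coincides with the paper's.
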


The asymmetry in the indices \(1,2,3\) is due to the choice of cross-section (normalization of three points and one pair of derivatives).

\subsection{The Case \boldmath{$n \geq 4$}}

To construct a transcendence basis for the field of absolute joint projective invariants of \(n\) points in general position, we proceed inductively, following the pattern observed in the cases \(n \leq 4\). We introduce the following families of \(4n - 8\) invariants:

\begin{align*}
&Z_2 = \{\zeta_{12}\},\\
&Z_3 := \{\zeta_{12},\; \zeta_{23},\tau\},\\
&Z_k := \{\xi^{(k)}_1,\xi^{(k)}_2,\xi^{(k)}_3,\xi^{(k)}_4\}, \qquad k = 4, \dots, n,
\end{align*}
where
\begin{gather*}
\xi^{(k)}_1=\frac{\delta_{123} \Phi^{(1)}_{1,k}}{\delta_{23k}}, \quad
\xi^{(k)}_2=\frac{\delta_{12k} \Phi^{(1)}_{1,3}}{\delta_{23k}},\\
\xi^{(k)}_3=\frac{(\delta_{23k}+\delta_{123} \Phi^{(1)}_{1,2}) (\Phi^{(k)}_{1,k} \Phi^{(1)}_{1,2}-\Phi^{(k)}_{2,k})}{\delta_{123} \Phi^{(1)}_{1,2}},\\
\xi^{(k)}_4=\frac{(\Phi^{(1)}_{1,k}\Phi^{(k)}_{2,3}+\delta_{23k} (p_1 q_k-p_k q_1))}{ \Phi^{(1)}_{1,3} (\Phi^{(k)}_{1,k} \Phi^{(1)}_{1,2}-\Phi^{(k)}_{2,k})},
\end{gather*}

Denote
\[
\mathcal{G}_n := \bigcup_{k=2}^{n} Z_k.
\]
Then we have the following theorem.

\begin{theorem}
Let \(n \geq 4\). Then the set \(\mathcal{G}_n\) of \(4n-8\) invariants is algebraically independent and forms a transcendence basis for the field \(\mathcal{I}_{n,0}\):
\[
\mathcal{I}_{n,0} = \mathbb{R}\bigl( \mathcal{G}_n \bigr).
\]
\end{theorem}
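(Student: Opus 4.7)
The plan is to reduce the general $n\geq 4$ case to the moving frame construction already performed for $n=3$ and $n=4$, exploiting the fact that the cross-section $\mathcal{K}$ is supported entirely on the first three blocks of coordinates and the gradient at point $1$. In particular, the moving frame $\rho(\boldsymbol{x})$ is a rational function of $(x_1,y_1,x_2,y_2,x_3,y_3,p_1,q_1)$ only. Consequently, for any $k\geq 4$, the invariantized coordinates $\iota(x_k),\iota(y_k),\iota(p_k),\iota(q_k)$ depend rationally on the moving frame parameters and on the single additional block $(x_k,y_k,p_k,q_k)$. So the explicit formulas derived in the $n=4$ case carry over verbatim with the index $4$ replaced by $k$; I would state this block-by-block independence as the first step and then record the formulas for $\iota(x_k),\iota(y_k),\iota(p_k),\iota(q_k)$.

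Next I would establish that the set $Z_k=\{\xi^{(k)}_1,\xi^{(k)}_2,\xi^{(k)}_3,\xi^{(k)}_4\}$ is rationally equivalent to $\{\iota(x_k),\iota(y_k),\iota(p_k),\iota(q_k)\}$ over $\mathbb{R}(\mathcal{G}_3)$. This is exactly the manipulation displayed for $n=4$: one reads off
\[
\tfrac{1}{\iota(x_k)}=1+\xi^{(k)}_1,\qquad \iota(y_k)=\xi^{(k)}_2\,\iota(x_k),
\]
and analogous rational expressions relating $\iota(p_k),\iota(q_k)$ to $\xi^{(k)}_3,\xi^{(k)}_4$. Since each of these transformations has a triangular rational Jacobian that is generically nonvanishing, the map is a birational automorphism of $\mathbb{R}(\iota(x_k),\iota(y_k),\iota(p_k),\iota(q_k))$, so the two generating families produce the same subfield. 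Combined with the $n=3$ theorem, this yields $\mathbb{R}(\mathcal{G}_n)=\mathbb{R}\bigl(\iota(x_k),\iota(y_k),\iota(p_k),\iota(q_k)\ :\ k=2,\dots,n\bigr)$, where $\iota(p_2),\iota(q_2),\iota(p_3),\iota(q_3)$ are absorbed into $Z_2\cup Z_3$ as in Theorem~\ref{thm:n3}.

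With generation established, a counting argument closes the theorem. One has
\[
|\mathcal{G}_n|=|Z_2|+|Z_3|+\sum_{k=4}^n |Z_k|=1+3+4(n-3)=4n-8,
\]
while Rosenlicht's theorem gives $\operatorname{trdeg}\mathcal{I}_{n,0}=\dim\mathcal{M}-\dim PGL(3,\mathbb{R})=4n-8$. The fundamental invariants produced by a moving frame are automatically functionally independent (this is the core structural property of invariantization used throughout Section~2), so the non-phantom invariantized coordinates themselves form a transcendence basis. By Step~2 the set $\mathcal{G}_n$ differs from this basis by a birational change of variables, hence is itself algebraically independent. Since $|\mathcal{G}_n|$ equals the transcendence degree, $\mathcal{G}_n$ is a transcendence basis and $\mathcal{I}_{n,0}=\mathbb{R}(\mathcal{G}_n)$.

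The main obstacle I expect is purely computational and lies in Step~2: one must verify, once and for all $k$, that the rational map from $(\xi^{(k)}_1,\ldots,\xi^{(k)}_4)$ to $(\iota(x_k),\iota(y_k),\iota(p_k),\iota(q_k))$ is birational, which amounts to exhibiting nonvanishing denominators such as $\delta_{23k}$, $\Phi^{(1)}_{1,2}$, $\Phi^{(1)}_{1,3}$, and $\Phi^{(k)}_{1,k}\Phi^{(1)}_{1,2}-\Phi^{(k)}_{2,k}$ on an explicit open dense subset of general position. Once this open set is specified, the remainder of the argument is formal and relies only on the general properties of the moving frame method together with Rosenlicht's bound.
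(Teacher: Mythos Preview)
Your proposal is correct and follows essentially the same approach as the paper: the paper does not give a separate proof for this theorem but relies on the moving frame construction, observing that since the cross-section $\mathcal{K}$ involves only the first three blocks, the invariantization of each additional block $(x_k,y_k,p_k,q_k)$ for $k\ge 4$ reproduces the $n=4$ formulas with $4$ replaced by $k$, and then invokes the automatic functional independence of fundamental invariants together with Rosenlicht's dimension count. Your write-up makes explicit the birational change of variables between $\{\iota(x_k),\iota(y_k),\iota(p_k),\iota(q_k)\}$ and $Z_k$ and the cardinality check $|\mathcal{G}_n|=4n-8$, which the paper leaves implicit.
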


In \cite{B-2025}, the same fields of invariants are described using a \emph{geometric} approach: the cases \(n=4,5,6\) and \(n \geq 7\) are considered separately, and explicit systems of polynomial generators are constructed. Our approach unifies these cases using the method of moving frames: the same cross-section \(\mathcal{K}\) and invariantization of the blocks \((x_k, y_k, p_k, q_k)\) provide a \emph{unified} formula for all \(n \geq 4\). Moreover, the invariants \(Z_3\) coincide with the geometric ones (up to a rational change of variables), and the new blocks \(Z_k\) (\(k \geq 4\)) are rationally equivalent to the invariants appearing in \cite{B-2025} upon addition of each new point.

\section{Field of Relative Invariants}

The structure of the field $\mathcal{I}_n$ of relative first-order joint projective invariants under the diagonal action of the projective group $PGL(3, \mathbb{R})$ was completely described by direct geometric methods in \cite{B-2025}. It was established that $\mathcal{I}_n$ is a simple algebraic extension of $\mathcal{I}_{n,0}$, and an explicit form of a primitive element of this extension was found for all $n$. 

In this section, we reproduce this result purely by the method of moving frames, in particular obtaining the primitive element of the extension as the invariantization of the Jacobian via the moving frame.

\subsection{What is already known about the field of relative invariants}

A rational function $F \in \mathcal O(\mathcal M)=\mathbb{R}(x_i,y_i,p_i,q_i \mid i=1,\ldots, n)$ is called a \emph{relative invariant of weight} $\omega\in\mathbb{Q}$ with a \emph{multiplier} $\mu:G\times\mathcal M\to\mathbb{R}^\times$ if 
\[
F(g \boldsymbol{x})=\mu(g,\boldsymbol{x})^\omega F(\boldsymbol{x})
\]
for all $g\in G, \boldsymbol{x} \in \mathcal M$.

In this section, we consider the case where $\mu(g,\boldsymbol{x}) = J(g, \boldsymbol{x})$, where $J(g, \boldsymbol{x})$ is the Jacobian determinant of the projective transformation defined by $g$ at the point $\boldsymbol{x}=(x_i,y_i,p_i,q_i \mid i=1,\ldots, n)$. 

Explicitly, 
\[
J(g, \boldsymbol{x})=\frac{D^n}{\displaystyle \prod_{i=1}^n s_i^3}, \quad s_i=c_1 x_i+c_2 y_i+1.
\]

In \cite{B-2025}, it was shown that $\mathcal{I}_n$ is a simple extension of the field of absolute invariants:
\[
\mathcal{I}_n = \mathcal{I}_{n,0}(z_n)
\]
for some primitive element $z_n$ of weight $\frac{1}{g}$, where $g = \gcd(n, 3)$.

An explicit form of the primitive element $z_n$, for $n \geq 5$, was also ''guessed'' in that article:
$$
z_n=
\left(\frac{\displaystyle
      \Bigl(\prod_{i=5}^{\,n} \Delta_{12i}\Bigr)^{3}\;
      \bigl(\Delta_{134}\,\Delta_{234}\bigr)^{\,n-3}}
     {\displaystyle
      \bigl(\Delta_{123}\,\Delta_{124}\bigr)^{\,2n-9}}\right)^{\frac{1}{g}}.
$$

Note that $z_n$ is a rational expression for all $n$. Here, we will find a new expression for the primitive element of the field using the moving frame method.

\subsection{Invariantization of the Jacobian}

In the previous section, we constructed a right moving frame
\[
\rho:\ X\longrightarrow G,\qquad
\rho(g\!\cdot\! \boldsymbol{x})=\rho(\boldsymbol{x})\,g^{-1},
\]
and introduced the invariantization operator for functions on the manifold $X$ by the rule
\[
\iota(F)(\boldsymbol{x}) := F\bigl(\rho(\boldsymbol{x})\!\cdot\!\boldsymbol{x}\bigr).
\]

When the function depends not only on the point but also on the group element, $F:G\times X\to\mathbb{R}$, we must normalize the \emph{group} argument by substituting the parameters defined by the moving frame. That is, the invariantization takes the form:
\begin{equation}\label{def:inv_2}
\iota(F)(g,\boldsymbol{x}) = F\bigl(\rho(\boldsymbol{x}),\,\boldsymbol{x}\bigr).
\end{equation}

We now define the function $\mathcal{C}(\boldsymbol{x})$ as the result of invariantizing the Jacobian:
\[
\mathcal{C}(\boldsymbol{x}) = J\!\bigl(\rho(\boldsymbol{x}),\,\boldsymbol{x}\bigr).
\]

We show that the invariantized Jacobian is a relative invariant of weight $-1$.

\begin{theorem}\label{t4}
For all $g\in G$ and $\boldsymbol{x}\in X$, we have
\[
\mathcal{C}(g\!\cdot\!\boldsymbol{x}) = J(g,\boldsymbol{x})^{-1}\,\mathcal{C}(\boldsymbol{x}).
\]
\end{theorem}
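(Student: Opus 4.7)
The plan is to reduce the transformation law of $\mathcal{C}$ to the cocycle identity satisfied by the Jacobian together with the right-equivariance of the moving frame $\rho$. Since $J$ arises from the chain rule for the projective action, it automatically satisfies the multiplicative cocycle
\[
J(h_1 h_2,\boldsymbol{x}) \;=\; J\bigl(h_1,\,h_2\!\cdot\!\boldsymbol{x}\bigr)\cdot J(h_2,\boldsymbol{x}),
\qquad h_1,h_2\in G,\ \boldsymbol{x}\in\mathcal{M},
\]
with $J(e,\boldsymbol{x})=1$. Specializing $h_1=g^{-1}$, $h_2=g$ gives immediately the inversion rule $J(g^{-1},g\!\cdot\!\boldsymbol{x})=J(g,\boldsymbol{x})^{-1}$, which is the only ingredient beyond the cocycle that I will need.

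First, I would write out $\mathcal{C}(g\!\cdot\!\boldsymbol{x})=J\bigl(\rho(g\!\cdot\!\boldsymbol{x}),\,g\!\cdot\!\boldsymbol{x}\bigr)$ and then apply the right-equivariance $\rho(g\!\cdot\!\boldsymbol{x})=\rho(\boldsymbol{x})\,g^{-1}$ established in Section~3, obtaining
\[
\mathcal{C}(g\!\cdot\!\boldsymbol{x}) \;=\; J\bigl(\rho(\boldsymbol{x})\,g^{-1},\,g\!\cdot\!\boldsymbol{x}\bigr).
\]
Next, I would apply the cocycle identity with $h_1=\rho(\boldsymbol{x})$ and $h_2=g^{-1}$, acting on the point $g\!\cdot\!\boldsymbol{x}$, to split this into
\[
J\bigl(\rho(\boldsymbol{x}),\,g^{-1}\!\cdot\!(g\!\cdot\!\boldsymbol{x})\bigr)\cdot J\bigl(g^{-1},\,g\!\cdot\!\boldsymbol{x}\bigr)
\;=\; J\bigl(\rho(\boldsymbol{x}),\,\boldsymbol{x}\bigr)\cdot J(g,\boldsymbol{x})^{-1},
\]
where in the first factor I used $g^{-1}\!\cdot\!(g\!\cdot\!\boldsymbol{x})=\boldsymbol{x}$ and in the second the inversion rule above. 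Recognizing $J(\rho(\boldsymbol{x}),\boldsymbol{x})=\mathcal{C}(\boldsymbol{x})$ yields the claim.

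There is essentially no hard computational step here: the proof is a purely formal consequence of two facts, the cocycle property of $J$ and the right-equivariance of $\rho$. The only point requiring a brief verification is the cocycle identity itself, which could be either cited as a standard chain-rule statement for actions by diffeomorphisms or checked directly from the explicit formula $J(g,\boldsymbol{x})=D^n/\prod_i s_i^{3}$ given above. I would include a one-line remark in the proof noting that the exponent $-1$ obtained here is consistent with calling $\mathcal{C}$ a relative invariant of weight $-1$ with Jacobian multiplier, thereby setting up the next subsection where $\mathcal{C}$ is used as a primitive element of $\mathcal{I}_n/\mathcal{I}_{n,0}$.
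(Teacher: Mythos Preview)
Your proof is correct and follows essentially the same route as the paper's: both arguments combine the right-equivariance $\rho(g\!\cdot\!\boldsymbol{x})=\rho(\boldsymbol{x})g^{-1}$ with the multiplicative cocycle identity for $J$ (applied to the pair $(\rho(\boldsymbol{x}),g^{-1})$ at the point $g\!\cdot\!\boldsymbol{x}$) and the resulting inversion rule $J(g^{-1},g\!\cdot\!\boldsymbol{x})=J(g,\boldsymbol{x})^{-1}$. The only cosmetic difference is the order in which you state the ingredients.
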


\begin{proof}
We use the multiplicative property of the Jacobian (chain rule): for all $g_1, g_2 \in G$ and $\boldsymbol{x} \in X$,
\begin{equation}\label{eq:jac-mult}
J(g_1 g_2, \boldsymbol{x}) = J\bigl(g_1,\,g_2\!\cdot\!\boldsymbol{x}\bigr)\cdot J(g_2,\boldsymbol{x}).
\end{equation}

Taking $g_1 = \rho(\boldsymbol{x})$ and $g_2 = g^{-1}$ at the point $g\cdot\boldsymbol{x}$ yields
\begin{equation}\label{eq:aux}
J\bigl(\rho(\boldsymbol{x})\,g^{-1},\,g\cdot\boldsymbol{x}\bigr)
= J\bigl(\rho(\boldsymbol{x}),\,\boldsymbol{x}\bigr)\cdot J\bigl(g^{-1},\,g\cdot\boldsymbol{x}\bigr).
\end{equation}

By the definition of the right moving frame, we have $\rho(g\cdot\boldsymbol{x}) = \rho(\boldsymbol{x})\,g^{-1}$, hence
\[
\mathcal{C}(g\cdot\boldsymbol{x}) = J\bigl(\rho(g\cdot\boldsymbol{x}),\,g\cdot\boldsymbol{x}\bigr)
= J\bigl(\rho(\boldsymbol{x})\,g^{-1},\,g\cdot\boldsymbol{x}\bigr).
\]

Substituting \eqref{eq:aux}, we obtain
\[
\mathcal{C}(g\cdot\boldsymbol{x}) = J\bigl(\rho(\boldsymbol{x}),\,\boldsymbol{x}\bigr)\cdot J\bigl(g^{-1},\,g\cdot\boldsymbol{x}\bigr)
= \mathcal{C}(\boldsymbol{x})\cdot J\bigl(g^{-1},\,g\cdot\boldsymbol{x}\bigr).
\]

From \eqref{eq:jac-mult}, taking $g_1 = g$, $g_2 = g^{-1}$, and using $J(1,\boldsymbol{x}) = 1$, we get
\[
J\bigl(g^{-1},\,g\cdot\boldsymbol{x}\bigr) = J(g,\boldsymbol{x})^{-1}.
\]

Thus,
\[
\mathcal{C}(g\cdot\boldsymbol{x}) = J(g,\boldsymbol{x})^{-1}\,\mathcal{C}(\boldsymbol{x}),
\]
which completes the proof.
\end{proof}

Now, for each $n$, define
\[
z'_n = \bigl(\mathcal{C}(\boldsymbol{x})\bigr)^{-\frac{1}{g}}.
\]

By construction, $z'_n$ is a relative invariant of weight $-\frac{1}{g}$ and is therefore the desired primitive element of the extension of the field of absolute invariants:
\[
\mathcal{I}_n = \mathcal{I}_{n,0}(z'_n).
\]

To complete the argument rigorously, we still need to verify that $z'_n$ is a rational expression for all $n$. We address this in the next subsection by computing $z'_n$ explicitly.

\subsection{Explicit formula for the primitive element}


We now compute the invariantized Jacobian and write down explicit formulas for the primitive element of the algebraic extension of the field of absolute invariants.

In practice, invariantization (normalization of the \emph{group} argument) amounts to substituting the parameters from the moving frame $g \gets \rho(\boldsymbol{x})$ into the expression for the Jacobian:
\[
\mathcal C(\boldsymbol{x})
:=J\!\big(\rho(\boldsymbol{x}),\boldsymbol{x}\big)
=\frac{D\!\big(\rho(\boldsymbol{x})\big)^{\,n}}
{\displaystyle\prod_{i=1}^{n}\Big(c_1\!\big(\rho(\boldsymbol{x})\big)\,x_i
+c_2\!\big(\rho(\boldsymbol{x})\big)\,y_i+1\Big)^{3}}.
\]
Here $D(g)$, $c_1(g)$, $c_2(g)$ are the parametric functions of $g\in PGL(3)$, and
$c_j\!\big(\rho(\boldsymbol{x})\big)$ means “the value of the parameter $c_j$ for the element $g=\rho(\boldsymbol{x})$”.

Direct computation yields, up to sign,
\[
\iota\!\left(\frac{D^{3}}{\prod_{i=1}^{3} s_i^{3}}\right)
=\delta_{123}^{-3},
\qquad
s_i=c_1x_i+c_2y_i+1,
\]
and for $i>3$ we have
\[
\iota\!\left(\frac{D}{s_i^{3}}\right)
=\frac{\Phi^{(1)}_{1,2}\,\Phi^{(1)}_{1,3}\,\delta_{123}^{2}}
{\left(\delta_{23i}+\delta_{123}\,\Phi^{(1)}_{1,i}\right)^3}.
\]

Hence, for any $n\ge 3$,
\[
\mathcal C(\boldsymbol{x})
=\iota\!\big(J(g,\boldsymbol{x})\big)
=\frac{1}{\delta_{123}^{3}}
\prod_{i=4}^{n}
\frac{\Phi^{(1)}_{1,2}\,\Phi^{(1)}_{1,3}\,\delta_{123}^{2}}
{\left(\delta_{23i}+\delta_{123}\,\Phi^{(1)}_{1,i}\right)^3}
=\delta_{123}^{\,2n-9}\,
\big(\Phi^{(1)}_{1,2}\,\Phi^{(1)}_{1,3}\big)^{n-3}
\!\!\prod_{i=4}^{n}\frac{1}{\left(\delta_{23i}+\delta_{123}\,\Phi^{(1)}_{1,i}\right)^3}.
\]

Thus we obtain the expression for the primitive element:
\[
z'_n
=\left(\delta_{123}^{\,2n-9}\,
\big(\Phi^{(1)}_{1,2}\,\Phi^{(1)}_{1,3}\big)^{n-3}
\!\!\prod_{i=4}^{n}\frac{1}{\left(\delta_{23i}+\delta_{123}\,\Phi^{(1)}_{1,i}\right)^3} \right)^{-\frac{1}{g}}.
\]

Since $g$ takes only two values, $1$ or $3$, it is easy to see that when $n \equiv 0 \pmod{3}$ the expression in parentheses is a perfect cube of a rational expression, so $z'_n$ is a rational expression, as required.

The identity~\eqref{eq:jac-mult} holds for an arbitrary multiplier $\mu$ (see, in particular,~\cite{Olver1997}), not only for the Jacobian. Accordingly, Theorem~\ref{t4} remains valid in this more general setting: invariantization of the multiplier produces a relative invariant of weight $-1$ of the field $\I^{(\mu)}_n$.


\section{Invariantization Contracting Homotopy}

For a free action of a topological group, the standard bar–cochain complex of continuous cochains is contractible, and therefore all higher cohomologies are trivial.

The goal of this section is to give a \emph{constructive} proof of the triviality of  
$H^m(G,M^\times)$, where $M^{\times}=\Map(X,\Bbb R^{\times})$,  
$G=PGL(3,\mathbb{R})$, and $m\ge 1$,  
by constructing an explicit contracting homotopy on the cochain complex  
$C^\bullet(G,M^\times)$, using the global right moving frame built in the previous sections
\[
\rho:\ \mathcal M\longrightarrow G,\qquad \rho(g\!\cdot\!\boldsymbol x)=\rho(\boldsymbol x)\,g^{-1}.
\]
In particular, using the moving frame, we define operators
\[
h^{\,m}:C^m\longrightarrow C^{m-1}\qquad(m\ge1),
\]
for which the multiplicative identity
\[
d^{m-1}h^{\,m}\ \cdot\ h^{\,m+1}d^m\;=\;\mathrm{id}_{C^m}
\]
holds.  
It follows immediately that for every $m$–cocycle $c\in Z^m(G,M^\times)$ we have
\[
c\;=\;d^{m-1}\!\bigl(h^{\,m}c\bigr),
\]
that is, \emph{invariantization via the moving frame} provides an \emph{explicit} $(m\!-\!1)$–coboundary generating the given $m$–cocycle, and hence $Z^m=B^m$ for all $m\ge 1$.

Below we fix the multiplicative definitions of the cochain complex, the differentials, and normalization, following the additive conventions of \cite{Braun}, and then explicitly implement the described invariantization–homotopy construction.

\subsection{Multiplicative cochain complex for the action of the projective group}

Let
\[
M^\times:=\Map(\mathcal M,\Bbb R^\times),
\]
the multiplicative abelian group (under pointwise multiplication) of all nonzero rational functions on $\mathcal M$.  
We consider the standard inhomogeneous \emph{multiplicative} cochain complex with coefficients in $M^\times$:
\[
(C^\bullet,d^\bullet):\qquad
C^0 \xrightarrow{\,d^0\,} C^1 \xrightarrow{\,d^1\,} C^2 \xrightarrow{\,d^2\,} \cdots,
\qquad
C^m:=\Map\!\big(G^m\times\mathcal M,\Bbb R^\times\big).
\]
For the action of $G$ on $M$ the boundary operators are given by
\[
(d^0 c)(g,\boldsymbol x)=\frac{c(g\!\cdot\!\boldsymbol x)}{c(\boldsymbol x)},
\qquad c\in C^0=M^\times,
\]
\[
(d^1 c)(g_1,g_2,\boldsymbol x)
=\frac{c(g_1,g_2\!\cdot\!\boldsymbol x)\,c(g_2,\boldsymbol x)}{c(g_1 g_2,\boldsymbol x)},
\qquad c\in C^1,
\]

and in general, for $m\ge 1$ and $c\in C^m$,
\begin{multline}
(d^m c)(g_1, \ldots, g_{m+1}; \boldsymbol x) =\\=
c(g_2, \ldots, g_{m+1}; \boldsymbol x) 
\cdot \prod_{i=1}^{m} 
c(g_1, \ldots, g_i g_{i+1}, \ldots, g_{m+1}; \boldsymbol x)^{(-1)^i} 
\cdot c(g_1, \ldots, g_m; g_{m+1} \cdot \boldsymbol x)^{(-1)^{m+1}}.
\end{multline}

As usual:
\[
Z^m(G,M^\times)=\ker d^m,\qquad
B^m(G,M^\times)=\operatorname{Im} d^{m-1},\qquad
H^m(G,M^\times)=Z^m/B^m.
\]

We have
\[
Z^0(G,M^\times)=\{\,c\in M^\times:\ c(g\!\cdot\!\boldsymbol x)=c(\boldsymbol x)\ \ \forall g,\boldsymbol x\,\}
=(M^\times)^G,
\]
that is, exactly the field of \emph{absolute invariants} of the action of $G$ on $\mathcal M$, explicitly described in Section~2. Hence
\[
H^0(G,M^\times)=(M^\times)^G.
\]

The group $B^1(G,M^\times)$ consists of \emph{$1$–coboundaries}, that is, functions of the form
\[
\mu(g,\boldsymbol x)=(d^0 c)(g,\boldsymbol x)=\frac{c(g\!\cdot\!\boldsymbol x)}{c(\boldsymbol x)},
\qquad c\in M^\times.
\]
Such $\mu$ are interpreted as \emph{multipliers} of a relative invariant $c$ of the group $G$, i.e.
$c(g\!\cdot\!\boldsymbol x)=\mu(g,\boldsymbol x)\,c(\boldsymbol x)$.

The group $Z^1(G,M^\times)$ consists of \emph{$1$–cocycles} $b:G\times\mathcal M\to\Bbb K^\times$ satisfying
\begin{equation}\label{eq:cocycle}
c(g_1 g_2,\boldsymbol x)=c(g_1,\,g_2\!\cdot\!\boldsymbol x)\,c(g_2,\boldsymbol x),\qquad 
c(1,\boldsymbol x)=1,\quad \forall g_1,g_2\in G,\ \boldsymbol x\in\mathcal M.
\end{equation}
In particular, the Jacobian of the prolonged action $J(g,\boldsymbol x)$ is a $1$–cocycle.

A \emph{contracting homotopy} of the cochain complex $(C^\bullet,d^\bullet)$ is a family of maps
$h^\bullet=\{h^m:C^m\to C^{m-1}\}_{m\ge1}$ satisfying, in multiplicative notation,
\begin{equation}\label{def:homotopy}
\bigl(d^{m-1}\circ h^m\bigr)\ \cdot\ \bigl(h^{m+1}\circ d^m\bigr)\;=\;\mathrm{id}_{C^m}\qquad(m\ge1).
\end{equation}
From the existence of a contracting homotopy it follows immediately that the complex is \emph{acyclic} in all degrees $m\ge1$, that is,
\[
H^m(G,M^\times)=1.
\]

\subsection{Triviality of $H^1(G,M^\times)$}

Fix a global right moving frame
\[
\rho:\ \mathcal M\longrightarrow G,\qquad \rho(g\!\cdot\!\boldsymbol x)=\rho(\boldsymbol x)\,g^{-1},
\]
and define the invariantization operator for $1$–cochains
\begin{equation}\label{def:iota}
\iota:\ C^1 \longrightarrow C^0,
\qquad
\iota(\lambda)(\boldsymbol x):=\lambda\bigl(\rho(\boldsymbol x),\boldsymbol x\bigr).
\end{equation}

\begin{theorem}\label{thm:Z1=B1}
For every $1$–cocycle $b\in Z^1(G,M^\times)$ the function
\[
B_\rho(\boldsymbol x):=\iota(b)(\boldsymbol x)^{-1}=\frac{1}{b\bigl(\rho(\boldsymbol x),\boldsymbol x\bigr)}\in M^\times
\]
satisfies
\[
(d^0 B_\rho)(g,\boldsymbol x)=\frac{B_\rho(g\!\cdot\!\boldsymbol x)}{B_\rho(\boldsymbol x)}=b(g,\boldsymbol x)\qquad \forall\,g\in G,\ \boldsymbol x\in\mathcal M.
\]
Hence $b=d^0 B_\rho$, that is, $Z^1(G,M^\times)=B^1(G,M^\times)$ and, in particular, $H^1(G,M^\times)=1$.
\end{theorem}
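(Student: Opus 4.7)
The plan is to verify directly that $d^0 B_\rho = b$, from which both $Z^1(G,M^\times) = B^1(G,M^\times)$ and the vanishing $H^1(G, M^\times) = 1$ follow formally. The strategy combines the right-equivariance of the moving frame with two applications of the 1-cocycle identity \eqref{eq:cocycle}.

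First, I would compute $B_\rho(g \cdot \boldsymbol{x})$ using the equivariance property $\rho(g \cdot \boldsymbol{x}) = \rho(\boldsymbol{x})\, g^{-1}$ established in Section~2:
\[
B_\rho(g \cdot \boldsymbol{x})^{-1} = b\bigl(\rho(g \cdot \boldsymbol{x}),\, g \cdot \boldsymbol{x}\bigr) = b\bigl(\rho(\boldsymbol{x})\, g^{-1},\, g \cdot \boldsymbol{x}\bigr).
\]
Next, I would apply the cocycle identity with $(g_1, g_2) = (\rho(\boldsymbol{x}),\, g^{-1})$ evaluated at $g \cdot \boldsymbol{x}$; since $g^{-1} \cdot (g \cdot \boldsymbol{x}) = \boldsymbol{x}$, the right-hand side factors as
\[
b\bigl(\rho(\boldsymbol{x})\, g^{-1},\, g \cdot \boldsymbol{x}\bigr) = b\bigl(\rho(\boldsymbol{x}),\, \boldsymbol{x}\bigr) \cdot b\bigl(g^{-1},\, g \cdot \boldsymbol{x}\bigr) = B_\rho(\boldsymbol{x})^{-1} \cdot b\bigl(g^{-1},\, g \cdot \boldsymbol{x}\bigr).
\]

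The final ingredient is the identity $b(g^{-1}, g \cdot \boldsymbol{x}) = b(g, \boldsymbol{x})^{-1}$, which follows from a second application of the cocycle relation to $(g_1, g_2) = (g,\, g^{-1})$ at the point $g \cdot \boldsymbol{x}$, combined with the normalization $b(1, \cdot) = 1$ built into the definition of $Z^1$. Substituting this gives
\[
B_\rho(g \cdot \boldsymbol{x})^{-1} = B_\rho(\boldsymbol{x})^{-1} \cdot b(g, \boldsymbol{x})^{-1},
\]
equivalently $B_\rho(g \cdot \boldsymbol{x}) / B_\rho(\boldsymbol{x}) = b(g, \boldsymbol{x})$, which is precisely $(d^0 B_\rho)(g, \boldsymbol{x}) = b(g, \boldsymbol{x})$, as required.

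I do not anticipate a genuine obstacle: the argument is the standard contracting-homotopy computation for a free group action, with the moving frame $\rho$ supplying an explicit representative of each orbit. The only subtlety worth flagging is that $B_\rho$ must actually lie in $M^\times$; since $b$ takes values in $\mathbb{R}^\times$ and $\rho$ is a rationally defined section of the action, the composition $b(\rho(\boldsymbol{x}), \boldsymbol{x})$ is nonvanishing and belongs to the same class of cochains. The same recipe — substitute the moving frame into the first group argument and invert — is exactly the $h^1$ piece of the general contracting homotopy $h^\bullet$ announced for $m\ge 1$, so the calculation above also serves as a template for the higher-degree extension.
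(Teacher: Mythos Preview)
Your proof is correct and follows essentially the same route as the paper: both use the right-equivariance $\rho(g\cdot\boldsymbol x)=\rho(\boldsymbol x)g^{-1}$, apply the cocycle identity to the pair $(\rho(\boldsymbol x),g^{-1})$ at $g\cdot\boldsymbol x$, and finish with the inversion $b(g^{-1},g\cdot\boldsymbol x)=b(g,\boldsymbol x)^{-1}$ obtained from a second use of the cocycle relation together with $b(1,\cdot)=1$. The only cosmetic difference is that the paper forms the ratio $(d^0 B_\rho)(g,\boldsymbol x)$ first and then simplifies, whereas you compute $B_\rho(g\cdot\boldsymbol x)^{-1}$ and compare; the ingredients and their order of use are identical.
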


\begin{proof}
By definition,
\[
(d^0 B_\rho)(g,\boldsymbol x)
=\frac{b(\rho(g\!\cdot\!\boldsymbol x),\,g\!\cdot\!\boldsymbol x)^{-1}}{b(\rho(\boldsymbol x),\,\boldsymbol x)^{-1}}
=\frac{b(\rho(\boldsymbol x),\,\boldsymbol x)}{b(\rho(\boldsymbol x)g^{-1},\,g\!\cdot\!\boldsymbol x)}.
\]
The cocycle condition \eqref{eq:cocycle} for the pair $(\rho(\boldsymbol x),g^{-1})$ gives
$$
b(\rho(\boldsymbol x)g^{-1},\,g\!\cdot\!\boldsymbol x)
=b(\rho(\boldsymbol x),\,\boldsymbol x)\,b(g^{-1},\,g\!\cdot\!\boldsymbol x).
$$
Therefore
\[
(d^0 B_\rho)(g,\boldsymbol x)=b(g^{-1},\,g\!\cdot\!\boldsymbol x)^{-1}=b(g,\boldsymbol x),
\]
where the last equality follows from \eqref{eq:cocycle} with $h=g^{-1}$.
\end{proof}

\medskip

For an arbitrary $\lambda\in C^1$, not necessarily a cocycle, from \eqref{def:iota} we obtain an explicit “defect” formula:
\begin{equation*}
(d^0\,\iota(\lambda)^{-1})(g,\boldsymbol x)
=\frac{\lambda(\rho(\boldsymbol x),\boldsymbol x)}{\lambda(\rho(\boldsymbol x)g^{-1},\,g\!\cdot\!\boldsymbol x)}
=\lambda(g,\boldsymbol x)\cdot\bigl(d^1\lambda\bigr)\!\bigl(\rho(g \boldsymbol x),g;\boldsymbol x\bigr)^{-1},
\end{equation*}
or
\begin{equation}\label{eq:defect}
(d^0\,\iota(\lambda)^{-1})(g,\boldsymbol x) \cdot\bigl(d^1\lambda\bigr)\!\bigl(\rho(g \boldsymbol x),g;\boldsymbol x\bigr)
=\lambda(g,\boldsymbol x).
\end{equation}

Thus $d^0\,\iota(\lambda)^{-1}=\lambda$ if and only if $\lambda\in Z^1(G,M^\times)$.

For any $a\in C^0$ (that is, $\lambda=d^0 a\in B^1$) invariantization is compatible with $d^0$ in the sense that
\[
\iota(d^0 a)(\boldsymbol x)=\frac{a\bigl(\rho(\boldsymbol x)\!\cdot\!\boldsymbol x\bigr)}{a(\boldsymbol x)},
\]
so the composition $\iota\circ d^0$ recovers $a$ up to a factor from $(M^\times)^G=\ker d^0$.

\medskip

Let
\[
s:Z^1 \longrightarrow C^0,\qquad s(b):=\iota(b)^{-1}=b\bigl(\rho(\boldsymbol x),\boldsymbol x\bigr)^{-1}.
\]
Then by Theorem~\ref{thm:Z1=B1} we have $d^0\circ s=\mathrm{id}_{Z^1}$.  
This is compactly expressed by the commutative diagram
\[
\xymatrix@C=3em@R=2em{
& Z^1 \ar[dl]_{\,s} \ar[dr]^{\mathrm{id}} & \\
C^0 \ar[rr]_{d^0} && Z^1
}
\]
that is,
\[
d^0\circ s=\mathrm{id}_{Z^1}.
\]

Let \(\pi:C^0\to C^0/(M^\times)^G\) be the natural projection modulo \(\ker d^0\).
Then for $B^1=\operatorname{Im} d^0$ we have the commutative square
\[
\xymatrix@C=4em@R=3em{
C^0 \ar[r]^{\ d^0\ } \ar[d]_{\ \pi} &
B^1 \ar[d]^{\ \pi\circ s} \\
C^0/(M^\times)^G \ar[r]_{\ \ \mathrm{id}\ } &
C^0/(M^\times)^G
}
\]
that is,
\[
(\pi\circ s\circ d^0)(a)=\pi(a)\qquad \forall\,a\in C^0.
\]

\medskip

For better insight we now consider the case $m=2$, in which the patterns of the general acyclicity proof already become visible.

\subsection{Triviality of $H^2(G,M^\times)$.}

Define the homotopy operators $h^2, h^3$, which are based on “freezing” the first argument of a cochain using the value of the moving frame. For $c\in C^{2}$ set
\begin{equation}\label{eq:def-h2}
h^{2}\colon C^{2}\to C^{1},\qquad
\big(h^{2}c\big)(g;\boldsymbol x):=c\bigl(\rho(g\cdot\boldsymbol x),\,g;\,\boldsymbol x\bigr).
\end{equation}
Similarly, for $\Lambda\in C^{3}$ define
\begin{equation}\label{eq:def-h3}
h^{3}\colon C^{3}\to C^{2},\qquad
\big(h^{3}\Lambda\big)(g_1,g_2;\boldsymbol x)
:=\Lambda\bigl(\rho(g_1 g_2 \cdot\boldsymbol x),\,g_1,\,g_2;\,\boldsymbol x\bigr).
\end{equation}

We now show that \eqref{eq:def-h2}–\eqref{eq:def-h3} satisfy the condition
\[
d^{1}h^{2}\ \cdot\ h^{3}d^{2}\;=\;\mathrm{id}_{C^{2}},
\]
which is the fragment of the contracting homotopy $h^\bullet$ on the entire cochain complex
$C^\bullet(G,M^\times)$.

\begin{theorem}\label{lem:defect-m2}
For an arbitrary cochain $c\in C^{2}$ the following multiplicative identity holds:
\begin{equation}\label{eq:defect-m2}
(d^{1}h^{2}c)(g_1,g_2;\boldsymbol x)
\cdot \bigl(h^{3}d^{2}c\bigr)(g_1,g_2;\boldsymbol x)=\;
c(g_1,g_2;\boldsymbol x),
\qquad
\forall\,g_1,g_2\in G,\ \boldsymbol x\in\mathcal M.
\end{equation}
\end{theorem}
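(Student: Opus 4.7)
The plan is to establish \eqref{eq:defect-m2} by a direct multiplicative expansion of both factors on the left, followed by pairwise cancellation of all but two ``phantom'' terms, which collapse to $c(g_1,g_2;\boldsymbol x)$ after a single application of the right-equivariance of the moving frame $\rho$. No structural ingredient beyond the definitions of $d^1,d^2,h^2,h^3$ and the identity $\rho(g\cdot\boldsymbol x)=\rho(\boldsymbol x)\,g^{-1}$ is required.

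First I would substitute $c':=h^2 c$ into the formula for $d^1$ and use the action identity $g_1\!\cdot\!(g_2\!\cdot\!\boldsymbol x)=g_1 g_2\!\cdot\!\boldsymbol x$ to rewrite the $\rho$'s on the displaced point, obtaining
\[
(d^1 h^2 c)(g_1,g_2;\boldsymbol x)
=\frac{c\bigl(\rho(g_1 g_2\!\cdot\!\boldsymbol x),\,g_1;\,g_2\!\cdot\!\boldsymbol x\bigr)\cdot c\bigl(\rho(g_2\!\cdot\!\boldsymbol x),\,g_2;\,\boldsymbol x\bigr)}{c\bigl(\rho(g_1 g_2\!\cdot\!\boldsymbol x),\,g_1 g_2;\,\boldsymbol x\bigr)}.
\]
In parallel, applying \eqref{eq:def-h3} and the explicit formula for $d^2$ at the arguments $(\rho(g_1 g_2\!\cdot\!\boldsymbol x),g_1,g_2;\boldsymbol x)$ yields
\[
(h^3 d^2 c)(g_1,g_2;\boldsymbol x)
= c(g_1,g_2;\boldsymbol x)\cdot c\bigl(\rho(g_1 g_2\!\cdot\!\boldsymbol x)\,g_1,\,g_2;\,\boldsymbol x\bigr)^{-1}\cdot c\bigl(\rho(g_1 g_2\!\cdot\!\boldsymbol x),\,g_1 g_2;\,\boldsymbol x\bigr)\cdot c\bigl(\rho(g_1 g_2\!\cdot\!\boldsymbol x),\,g_1;\,g_2\!\cdot\!\boldsymbol x\bigr)^{-1}.
\]
In the product of these two expressions, the factors $c(\rho(g_1 g_2\!\cdot\!\boldsymbol x),g_1 g_2;\boldsymbol x)^{\pm 1}$ and $c(\rho(g_1 g_2\!\cdot\!\boldsymbol x),g_1;g_2\!\cdot\!\boldsymbol x)^{\pm 1}$ each appear with both signs and cancel, leaving only
\[
c(g_1,g_2;\boldsymbol x)\cdot c\bigl(\rho(g_2\!\cdot\!\boldsymbol x),\,g_2;\,\boldsymbol x\bigr)\cdot c\bigl(\rho(g_1 g_2\!\cdot\!\boldsymbol x)\,g_1,\,g_2;\,\boldsymbol x\bigr)^{-1}.
\]

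The final step is the equivariance identity: setting $\boldsymbol y:=g_2\!\cdot\!\boldsymbol x$ in $\rho(g\!\cdot\!\boldsymbol y)=\rho(\boldsymbol y)\,g^{-1}$ with $g=g_1$ gives $\rho(g_1 g_2\!\cdot\!\boldsymbol x)\,g_1=\rho(g_2\!\cdot\!\boldsymbol x)$, so the last two surviving factors are mutually inverse and cancel, leaving exactly $c(g_1,g_2;\boldsymbol x)$, as required. The principal ``obstacle'' is purely notational: tracking which slots carry group arguments versus base-space arguments and keeping the signs aligned through the multiplicative coboundary formula. Every substantive step is either a definition of a homotopy/coboundary operator or the right-equivariance of $\rho$ — exactly the same pattern used in the proof of Theorem~\ref{t4} — so no new conceptual ingredient is needed, and the construction plainly generalizes to arbitrary $m\ge 1$ by cancelling $2m$ phantom factors in exactly the same way.
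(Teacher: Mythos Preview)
Your proof is correct and is essentially identical to the paper's: both expand $(d^1h^2c)$ and $(h^3d^2c)$ directly from the definitions, multiply, and cancel the matching factors using the right-equivariance identity $\rho(g_1g_2\!\cdot\!\boldsymbol x)\,g_1=\rho(g_2\!\cdot\!\boldsymbol x)$. The only cosmetic difference is that the paper introduces the abbreviation $R:=\rho(g_1g_2\!\cdot\!\boldsymbol x)$ and applies the equivariance identity \emph{before} writing out $(d^1h^2c)$ (so that the second numerator factor is already $c(Rg_1,g_2;\boldsymbol x)$ and all four cancellations happen at once), whereas you keep $\rho(g_2\!\cdot\!\boldsymbol x)$ and $\rho(g_1g_2\!\cdot\!\boldsymbol x)\,g_1$ as distinct symbols until the very end --- this changes only the order in which the bookkeeping is recorded, not the argument itself.
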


\begin{proof}
To simplify notation introduce an auxiliary symbol for the value of the moving frame at the shifted point:
\[
R=\rho(g_1 g_2\cdot\boldsymbol x).
\]
From the equivariance property of the moving frame we have $R=\rho(\boldsymbol x)(g_1g_2)^{-1}$, hence $Rg_1 = \rho(\boldsymbol x)g_2^{-1} = \rho(g_2\cdot\boldsymbol x)$.

Then, by the definition of the coboundary $d^1$ and the operator $h^2$, we have
\begin{gather*}
(d^{1}h^{2}c)(g_1,g_2;\boldsymbol x)
=\frac{(h^{2}c)(g_1;\,g_2\!\cdot\!\boldsymbol x)\cdot (h^{2}c)(g_2;\boldsymbol x)}{(h^{2}c)(g_1g_2;\boldsymbol x)}
=\\=\frac{c\bigl(\rho(g_1 g_2\cdot\boldsymbol x),\,g_1;\,g_2\!\cdot\!\boldsymbol x\bigr)\cdot c\bigl(\rho(g_2\cdot\boldsymbol x),\,g_2;\,\boldsymbol x\bigr)}{c\bigl(\rho(g_1g_2\cdot\boldsymbol x),\,g_1g_2;\,\boldsymbol x\bigr)}=
\frac{c(R,\,g_1;\,g_2\!\cdot\!\boldsymbol x)\cdot c(Rg_1,\,g_2;\,\boldsymbol x)}{c(R,\,g_1g_2;\,\boldsymbol x)}.
\end{gather*}

Now compute the result of applying the homotopy to the coboundary $(h^{3}d^{2}c)$.  
By definition of the operator $h^3$ (see \eqref{eq:def-h3}), this is the value of the $2$–coboundary $(d^2 c)$ at the triple of arguments $(R,\, g_1,\, g_2)$:
$$
 (h^{3}d^{2}c)(g_1,g_2;\boldsymbol x) = (d^{2}c)(R,\,g_1,\,g_2;\,\boldsymbol x)
 =\frac{c(g_1,g_2;\,\boldsymbol x)\cdot c(R,\,g_1g_2;\,\boldsymbol x)}{c(Rg_1,\,g_2;\,\boldsymbol x)\cdot c(R,\,g_1;\,g_2\!\cdot\!\boldsymbol x)}.
$$

Multiplying the obtained expressions for $(d^{1}h^{2}c)(g_1,g_2;\boldsymbol x)$ and $(h^{3}d^{2}c)(g_1,g_2;\boldsymbol x)$ yields the required result.
\end{proof}

As a consequence we immediately obtain the triviality of $H^2(G,M^\times)$.

\subsection{Contracting Homotopy for Arbitrary $m$.}

For each $m\ge 1$ define the homotopy operator
$h^m\colon C^m\to C^{m-1}$ by
\begin{equation}\label{eq:hm-def-final}
\bigl(h^{m}c\bigr)(g_1,\ldots,g_{m-1};\boldsymbol x)
\;:=\;
c\bigl(\rho(g_1 \cdots g_{m-1} \!\cdot\! \boldsymbol x),\,g_1,\ldots,g_{m-1};\,\boldsymbol x\bigr),
\qquad c\in C^m.
\end{equation}
For $m=2,3$ this agrees with the previously introduced definitions of $h^2,h^3$.

\begin{theorem}
For every $m\ge 1$ and any cochain $c\in C^{m}$ the following multiplicative identity holds:
\begin{equation*}
\bigl(d^{m-1}h^{m}c\bigr)(g_1,\ldots,g_m;\boldsymbol x)\,\cdot\,
\bigl(h^{m+1}d^{m}c\bigr)(g_1,\ldots,g_m;\boldsymbol x)
\;=\;
c(g_1,\ldots,g_m;\boldsymbol x).
\end{equation*}
\end{theorem}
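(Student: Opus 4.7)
The plan is to mimic, for general $m$, the bookkeeping already carried out for $m=2$ in Theorem~\ref{lem:defect-m2}. The essential mechanism is that $h^m$ ``freezes'' the first argument of a cochain at the value of the moving frame taken at the translated point, while the two adjacent coboundaries $d^{m-1}$ and $d^m$ produce telescoping products whose signs are offset by one. Everything will follow from a single instance of right equivariance together with careful sign and index bookkeeping.

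First I would set $R:=\rho(g_1\cdots g_m\cdot\boldsymbol x)$ and record the key consequence of right equivariance $\rho(g\cdot\boldsymbol x)=\rho(\boldsymbol x)g^{-1}$: for every $0\le k\le m$,
\[
R\,g_1\cdots g_k \;=\; \rho(g_{k+1}\cdots g_m\cdot\boldsymbol x).
\]
In particular $\rho(g_2\cdots g_m\cdot\boldsymbol x)=Rg_1$; moreover, whenever one either collapses two adjacent group arguments $g_i,g_{i+1}$ into their product or shifts the base point from $\boldsymbol x$ to $g_m\cdot\boldsymbol x$, the \emph{full} product $g_1\cdots g_m$ is unchanged, so the moving-frame argument remains equal to $R$. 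This single observation controls every occurrence of $\rho$ that will appear below.

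Second, I would expand $(d^{m-1}h^mc)(g_1,\ldots,g_m;\boldsymbol x)$ term-by-term from the definition of $d^{m-1}$, substituting the formula for $h^mc$ and using Step~1 to simplify each moving-frame argument. The outcome is a product of $m+1$ cochain values: a ``leading'' factor $c(Rg_1,g_2,\ldots,g_m;\boldsymbol x)$, a middle product $\prod_{i=1}^{m-1}c(R,g_1,\ldots,g_ig_{i+1},\ldots,g_m;\boldsymbol x)^{(-1)^i}$, and a ``trailing'' factor $c(R,g_1,\ldots,g_{m-1};g_m\cdot\boldsymbol x)^{(-1)^m}$. Similarly I would unfold $(h^{m+1}d^mc)(g_1,\ldots,g_m;\boldsymbol x)=(d^mc)(R,g_1,\ldots,g_m;\boldsymbol x)$ directly, and peel off its $i=1$ product term as the isolated factor $c(Rg_1,g_2,\ldots,g_m;\boldsymbol x)^{-1}$; reindexing $j=i-1$ in the remaining $i\ge 2$ product converts it into $\prod_{j=1}^{m-1}c(R,g_1,\ldots,g_jg_{j+1},\ldots,g_m;\boldsymbol x)^{(-1)^{j+1}}$, and the overall expansion also contains the leading factor $c(g_1,\ldots,g_m;\boldsymbol x)$ and the trailing factor $c(R,g_1,\ldots,g_{m-1};g_m\cdot\boldsymbol x)^{(-1)^{m+1}}$.

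Finally I would multiply the two expansions and observe that everything cancels except one factor: the two copies of $c(Rg_1,g_2,\ldots,g_m;\boldsymbol x)$ appear with opposite exponents; the two middle products cancel term-by-term because at each index their exponents $(-1)^i$ and $(-1)^{i+1}$ sum to zero; the two trailing factors $c(R,g_1,\ldots,g_{m-1};g_m\cdot\boldsymbol x)$ cancel for the same reason $(-1)^m+(-1)^{m+1}=0$; and the sole surviving factor is $c(g_1,\ldots,g_m;\boldsymbol x)$, which is the right-hand side. The main obstacle is purely notational: one must keep the sign conventions and the index shift induced by the insertion of $R$ as the new first slot of $d^m$ straight, so that each term of $(d^{m-1}h^mc)$ pairs with exactly one term of opposite exponent in $(h^{m+1}d^mc)$. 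No ingredient beyond the equivariance identity and the $d^m$-formula is needed.
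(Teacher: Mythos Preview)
Your proposal is correct and follows essentially the same route as the paper: set $R=\rho(g_1\cdots g_m\cdot\boldsymbol x)$, use right equivariance to identify $Rg_1=\rho(g_2\cdots g_m\cdot\boldsymbol x)$ and to see that every collapsed-product or base-shifted argument still yields $R$, then expand both $(d^{m-1}h^m c)$ and $(h^{m+1}d^m c)=(d^m c)(R,g_1,\ldots,g_m;\boldsymbol x)$ and cancel term by term. Your explicit reindexing $j=i-1$ for the $i\ge 2$ part of $d^m$ is exactly the index shift the paper performs implicitly, and the pairing of exponents $(-1)^i$ with $(-1)^{i+1}$ is the same cancellation mechanism the paper uses.
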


\begin{proof}
Fix $m\ge 1$, a point $\boldsymbol x\in\mathcal M$, and set
\[
R = \rho(g_1\cdots g_m\!\cdot\!\boldsymbol x)\in G.
\]
From the equivariance property of the moving frame we have
$R=\rho(\boldsymbol x)(g_1g_2\cdots g_m)^{-1}$, and therefore obtain the key relation
\begin{equation}\label{Rg}
Rg_1 = \rho(\boldsymbol x)(g_2 \cdots g_m)^{-1} 
      = \rho(g_2 \cdots g_m\cdot\boldsymbol x).
\end{equation}

We first compute the factor ${h^{m+1}d^{m}c}$.  
By definition~\eqref{eq:hm-def-final},
\begin{gather*}
\bigl(h^{m+1}d^m c\bigr)(g_1,\ldots,g_m;\boldsymbol x)
=
\bigl(d^m c\bigr)\bigl(\rho(g_1\cdots g_m\!\cdot\!\boldsymbol x),\,g_1,\ldots,g_m;\boldsymbol x\bigr)
=
(d^m c)(R,g_1,\ldots,g_m;\boldsymbol x)=\\[4pt]
=
c(g_1, \ldots, g_{m}; \boldsymbol x)
\cdot 
c(R g_1, \ldots, g_i g_{i+1}, \ldots, g_{m}; \boldsymbol x)^{-1} 
\times \\[4pt]
\times 
\prod_{i=2}^{m} 
c(R,g_1, \ldots, g_i g_{i+1}, \ldots, g_{m}; \boldsymbol x)^{(-1)^i} 
\cdot 
c(g_1, \ldots, g_{m-1}; g_{m} \cdot \boldsymbol x)^{(-1)^{m+1}}.
\end{gather*}

\medskip

Now compute the factor ${d^{m-1}h^{m}c}$.
\begin{gather*}
\bigl(d^{m-1} h^{m}c\bigr)(g_1,\ldots,g_m;\boldsymbol x)
=
(h^{m}c)(g_2, \ldots, g_{m}; \boldsymbol x) 
\cdot 
\prod_{i=1}^{m-1} 
(h^{m}c)(g_1, \ldots, g_i g_{i+1}, \ldots, g_{m}; \boldsymbol x)^{(-1)^i} 
\\[4pt]
\times\ 
(h^{m}c)(g_1, \ldots, g_{m-1}; g_{m} \cdot \boldsymbol x)^{(-1)^{m}}.
\end{gather*}

We now expand each of the factors using \eqref{Rg}.

For the first factor:
\begin{gather*}
(h^{m}c)(g_2, \ldots, g_{m}; \boldsymbol x)
=
c(\rho(g_2 \cdots g_{m} \cdot \boldsymbol x),g_2, \ldots, g_{m}; \boldsymbol x)
=
c(Rg_1,g_2, \ldots, g_{m}; \boldsymbol x),
\end{gather*}
and this same factor occurs in the expansion of ${h^{m+1}d^{m}c}$.

Similarly, for $1\le i \le m-1$, noting that the product $g_1\cdots g_i g_{i+1}\cdots g_m$
coincides with $g_1\cdots g_m$, we get
\begin{gather*}
(h^{m}c)(g_1, \ldots, g_i g_{i+1}, \ldots, g_{m}; \boldsymbol x)^{(-1)^i}
=
c(\rho(g_1\cdots g_m\!\cdot\!\boldsymbol x),g_1, \ldots, g_i g_{i+1}, \ldots, g_{m}; \boldsymbol x)^{(-1)^i}
\\[4pt]
=
c(R, g_i g_{i+1}, \ldots, g_{m}; \boldsymbol x)^{(-1)^i}.
\end{gather*}

Finally,
\begin{gather*}
(h^{m}c)(g_1, \ldots, g_{m-1}; g_{m} \cdot \boldsymbol x)^{(-1)^{m}}
=
c(\rho(g_1\cdots g_{m}\cdot \boldsymbol x),g_1, \ldots, g_{m-1}; g_{m} \cdot \boldsymbol x)^{(-1)^{m}}
\\[4pt]
=
c(R,g_1, \ldots, g_{m-1}; g_{m} \cdot \boldsymbol x)^{(-1)^{m}}.
\end{gather*}

Thus, in the product
\[
\bigl(d^{m-1}h^{m}c\bigr)(g_1,\ldots,g_m;\boldsymbol x)\,\cdot\,
\bigl(h^{m+1}d^{m}c\bigr)(g_1,\ldots,g_m;\boldsymbol x)
\]
all terms cancel pairwise, leaving only the single factor
\[
c(g_1,\ldots,g_m;\boldsymbol x),
\]
which proves the required identity.

\end{proof}

As a consequence we obtain
\[
H^m(G,M^\times)=1,\qquad m\ge1.
\]

\end{document}